\documentclass[10pt]{amsart}
\usepackage{amssymb,amsmath}
\usepackage{enumerate}
\usepackage{graphicx}
\usepackage{color}
\usepackage{mathrsfs}
\usepackage{float}

\numberwithin{equation}{section}

\newtheorem{theorem}{Theorem}[section]
\newtheorem{proposition}[theorem]{Proposition}
\newtheorem{lemma}[theorem]{Lemma}
\newtheorem{corollary}[theorem]{Corollary}
\theoremstyle{definition}

\newtheorem{remark}[theorem]{Remark}

\newcommand{\sint}{\int}%{\scalebox{1.3}{$\int$}}

\DeclareMathOperator{\BUC}{BUC}

%\textheight 22.5truecm \textwidth 14.5truecm
%\setlength{\oddsidemargin}{0.35in}\setlength{\evensidemargin}{0.35in}
%\setlength{\topmargin}{-.5cm}

%---------------------------------------------------------------------------------

% \numberwithin{section}{chapter}\numberwithin{equation}{chapter}
%\numberwithin{equation}{section}
%\setcounter{tocdepth}{2}

\begin{document}
\title[Distributionally robust random walks]{Limits of random walks with distributionally robust transition probabilities}
 \author{Daniel Bartl
 \and Stephan Eckstein
 \and Michael Kupper}
 \address{University of Vienna}
\email{daniel.bartl@univie.ac.at}
\address{University of Konstanz}
\email{stephan.eckstein@uni-konstanz.de}
\address{University of Konstanz}
\email{kupper@uni-konstanz.de}
\keywords{Nonlinear L{\'e}vy processes, Wasserstein distance,  scaling limit}
\date{\today}
\subjclass[010]{620G51, 60G50, 47H20}

\begin{abstract}
We consider a nonlinear random walk which, in each time step, is free to choose its own transition probability within a neighborhood (w.r.t.\ Wasserstein distance) of the transition probability of a fixed L\'evy process. In analogy to the classical framework we show that, when passing from discrete to continuous time via a scaling limit, this nonlinear random walk gives rise to a nonlinear semigroup. We explicitly compute the generator of this semigroup and corresponding PDE as a perturbation of the generator of the initial L\'evy process.
\end{abstract}

\maketitle
\setcounter{equation}{0}

%-----------------------   mainmatter   ------------------------------------------
%-----------------------   mainmatter   ------------------------------------------
%-----------------------   mainmatter   ------------------------------------------
%-----------------------   mainmatter   ------------------------------------------
%-----------------------   mainmatter   ------------------------------------------

\section{Introduction and main results}
L\'evy processes are mathematically tractable and therefore often used to model certain real-world phenomena.
This bears the task of correctly specifying / estimating the corresponding parameters, e.g., drift and variance in case of a Brownian motion. In many situations this can only be achieved up to a certain degree of \emph{uncertainty}.
For this reason, Peng \cite{peng2007g} introduced his nonlinear Brownian motion and started a systematic investigation of this object.
The nonlinear Brownian motion is defined via a nonlinear PDE and, heuristically speaking, within each infinitesimal time increment it is allowed to select its parameters (drift and variance) within a given fixed set.
Accordingly, a nonlinear Feynman-Kac formula makes it possible to compute the worst case expectations of certain functions of the random process.
Several works followed this \emph{parametric} nonlinearization approach to L\'evy processes, see, e.g., Hu and Peng \cite{hu2009gl}, Neufeld and Nutz \cite{neufeld2017nonlinear}, Denk et al.~\cite{denk2020semigroup} and K\"uhn \cite{kuhn2018viscosity}.

On the other hand, in discrete time where no mathematical limitations force one to restrict to parametric uncertainty, a more natural and general nonlinearization of a given (baseline) random walk is of \emph{nonparametric} nature.
We start with a random walk which is the discrete-time restriction of an $\mathbb{R}^d$-valued L\'evy process starting in zero, whose marginal laws we denote by $(\mu_t)_{t\geq 0}$.
For instance, $\mu_t$ can be the normal distribution with mean 0 and variance $t$ in which case we end up with a Gaussian random walk.

For a fixed parameter $\delta\geq0$ representing the level of freedom (or uncertainty) and $n\in\mathbb{N}$, the  \emph{nonlinear random walk} with time index $\mathbb{T}=\{0=t_0<t_1<t_2<\cdots\}\subset\mathbb{R}_+$ is defined as follows:
for each time step $t_n\rightsquigarrow t_{n+1}$,
the nonlinear random walk is allowed to select its transition probability within
the neighborhood of size  $\delta \Delta t_{n+1}$ of the transition probability
$\mu_{\Delta t_{n+1}}$
of our baseline random walk,
where $\Delta t_{n+1}:=t_{n+1}-t_n$ and the neighborhood is taken w.r.t.\ the $p$-th \emph{Wasserstein distance}
$\mathcal{W}_p$.\footnote{
	For $\mu,\nu\in\mathcal{P}_p(\mathbb{R}^d)$ (the set of Borel probabilities on $\mathbb{R}^d$ with finite $p$-th moment), define
	$\mathcal{W}_p(\mu,\nu):=\inf\big\{  \int_{\mathbb{R}^d\times\mathbb{R}^d} |y-x|^p\,\gamma(dx,dy) : \gamma\in\mathrm{Cpl}(\mu,\nu) \big\}^{1/p} $
	where $\mathrm{Cpl}(\mu,\nu)$ is the set of all Borel probabilities on $\mathbb{R}^d\times\mathbb{R}^d$ with first and second marginal $\mu$ and $\nu$, respectively.
	Throughout, $|\cdot|$ is the Euclidean norm.}
This means that, conditioned on the event that the nonlinear random walk takes
the value $x\in\mathbb{R}^d$ at time $t_n$, the worst possible expected value
of an arbitrary function $f\in C_0(\mathbb{R}^d)$ at time $t_{n+1}$ is given by
\[ S(\Delta t_{n+1}) f(x)
:=\sup\Big\{ \sint_{\mathbb{R}^d} f(x+y)\,\nu(dy) : \nu \text{ such that } \mathcal{W}_p(\mu_{\Delta t_{n+1}}, \nu)\leq \delta \Delta t_{n+1}\Big\}. \]
Recall here that $C_0(\mathbb{R}^d)$ is the set of continuous function vanishing at infinity.
Iterating this scheme, conditioned on the event that the nonlinear random walk starts in $x$ at time 0, the worst possible expectation at time $t_n\in\mathbb{T}$ is given by
\begin{align}
\label{eq:def.cal.S.intro}
\mathscr{S}^{\mathbb{T}}(t_n)f(x)
:=S(t_1-t_0) \circ \dots \circ S(t_{n}-t_{n-1})f(x).
\end{align}
In conclusion, the corresponding processes follow the same heuristics as the nonlinear Brownian motion and can be seen as a discrete time nonparametric reincarnation thereof.

Regarding the computation of $\mathscr{S}^{\mathbb{T}}$ we
stumble on a recurring scheme in discrete time: while definitions are
mathematically simple, explicit computations are often very challenging.
Here this is evident as $S$ and therefore $\mathscr{S}^{\mathbb{T}}$ are results of (iterated, nonparametric, and infinite dimensional) control problems.
In the following, we shall show that when passing from small to infinitesimal time steps, the $\mathscr{S}^{\mathbb{T}}$'s give rise to a nonlinear semigroup and that a computation of the limit is possible via a nonlinear PDE.

\vspace{0.5em}
For the rest of this article we shall fix $p\in(1,\infty)$ and assume that our initial L\'evy process has finite $p$-th moment, i.e., $\int_{\mathbb{R}^d} |x|^p\,\mu_1(dx)<\infty$.
For convenience, for every $n\in\mathbb{N}$ consider dyadic numbers $\mathbb{T}_n:=2^{-n}\mathbb{N}_0$ and set $\mathscr{S}^n(t):=\mathscr{S}^{\mathbb{T}_n}(t_n)\circ S(t-t_n)$ for $t\geq 0$, where $t_n\in \mathbb{T}_n$ is the closest dyadic number prior to $t$.

\begin{proposition}[Semigroup]
	\label{prop:semigroup.intro}
	Both $\mathscr{S}^n$ and $\mathscr{S}:=\lim_{n\to\infty} \mathscr{S}^n$ are well-defined and the family $(\mathscr{S}(t))_{t\geq 0}$ defines a sublinear semigroup on $C_0(\mathbb{R}^d)$.
	More precisely, for every $s,t\geq 0$ and $x\in\mathbb{R}^d$, %we have that
	\begin{enumerate}[(i)]
		\item $\mathscr{S}(t)$ maps $C_0(\mathbb{R}^d)$ to itself and $\mathscr{S}(t)\circ\mathscr{S}(s)=\mathscr{S}(t+s)$,
		\item $\mathscr{S}(t)(\cdot)(x)\colon C_0(\mathbb{R}^d)\to\mathbb{R}$ is sublinear, increasing, and maps zero to zero, and $\mathscr{S}(t)$ is contractive, i.e., $\|\mathscr{S}(t) f - \mathscr{S}(t) g\|_\infty \leq \|f - g \|_\infty$ for all $f, g \in C_0(\mathbb{R}^d)$.
	\end{enumerate}
\end{proposition}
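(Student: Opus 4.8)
The plan is to work bottom-up, starting from the one-step operator $S(h)$. Writing $S(h)f(x)=\mathcal I_h(\tau_xf)$ with $\tau_xf:=f(x+\cdot)$ and $\mathcal I_h(g):=\sup\{\int g\,d\nu:\mathcal W_p(\mu_h,\nu)\le\delta h\}$, the sublinearity, monotonicity, $0\mapsto0$, and $\|\cdot\|_\infty$-contractivity of $\mathcal I_h$ are immediate, since it is a supremum of integrals against probability measures. These transfer to $S(h)$ on $C_0(\mathbb R^d)$; in particular $|S(h)f(x)-S(h)f(x')|\le\|\tau_xf-\tau_{x'}f\|_\infty=\sup_y|f(x+y)-f(x'+y)|$, so $S(h)f$ is uniformly continuous with the same modulus as $f$. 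For vanishing at infinity I would use uniform tightness of the Wasserstein ball: along an optimal coupling the $L^p$-triangle inequality gives $\int|y|^p\,\nu(dy)\le M_h^p:=\bigl((\int|z|^p\,\mu_h(dz))^{1/p}+\delta h\bigr)^p$ for every admissible $\nu$, hence $\sup_\nu\nu(|y|>R)\le M_h^p/R^p$; splitting $f$ at radius $R$ then yields $S(h)f\in C_0(\mathbb R^d)$.

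The structural key is subadditivity, $S(s)\circ S(t)\le S(s+t)$. To prove it I take an admissible $\nu_1$ for the outer step and, by measurable selection, an admissible kernel $y_1\mapsto\nu_2^{y_1}$ nearly attaining the inner supremum; gluing yields the law $\nu$ of $Y_1+Y_2$ with $Y_1\sim\nu_1$, $Y_2\mid Y_1\sim\nu_2^{Y_1}$. Running an optimal coupling of $(\mu_s,\nu_1)$ and, conditionally on $y_1$, an optimal coupling of $(\mu_t,\nu_2^{y_1})$ with the $\mu_t$-part drawn independently produces a coupling of $\mu_{s+t}=\mu_s*\mu_t$ and $\nu$, and the $L^p$-triangle inequality gives $\mathcal W_p(\mu_{s+t},\nu)\le\mathcal W_p(\mu_s,\nu_1)+\bigl(\int\mathcal W_p(\mu_t,\nu_2^{y_1})^p\,\nu_1(dy_1)\bigr)^{1/p}\le\delta s+\delta t$. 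Thus $\nu$ is admissible for $S(s+t)$, and taking suprema gives the inequality. Combined with monotonicity of each $S(h)$, refining a partition of $[0,T]$ can only decrease the iterated operator; as $\mathbb T_{n+1}\supset\mathbb T_n$ refines the partition behind $\mathscr S^n$, the sequence $(\mathscr S^n(t)f)_n$ is pointwise decreasing.

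To extract the limit I sandwich: taking $\nu=\mu_h$ at each step gives $\mathscr S^n(t)f\ge P_tf$, where $P_t$ is the linear (Feller) Lévy transition semigroup, while monotonicity gives $\mathscr S^n(t)f\le\mathscr S^0(t)f$. Since $P_tf,\mathscr S^0(t)f\in C_0(\mathbb R^d)$, the decreasing sequence is uniformly bounded, uniformly small at infinity, and — each term inheriting the modulus of continuity of $f$ — equicontinuous; monotone pointwise convergence together with equicontinuity and uniform tails upgrades to uniform convergence, so $\mathscr S(t)f:=\lim_n\mathscr S^n(t)f\in C_0(\mathbb R^d)$. Each $\mathscr S^n(t)$ is sublinear, increasing, $0\mapsto0$, and $1$-Lipschitz (all preserved under composition), and these four properties survive the uniform limit, giving (ii).

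For the algebraic semigroup property I first treat dyadic times. If $s,t\in\mathbb T_n$ then for large $n$ neither $\mathscr S^n(s)$ nor $\mathscr S^n(t)$ carries a remainder step, so their composition is exactly the dyadic-grid operator on $[0,s+t]$, i.e.\ $\mathscr S^n(t)\circ\mathscr S^n(s)=\mathscr S^n(t+s)$; letting $n\to\infty$ and using that $\mathscr S^n(t)$ is $1$-Lipschitz (hence $\mathscr S^n(t)\mathscr S^n(s)f\to\mathscr S(t)\mathscr S(s)f$) yields $\mathscr S(t)\circ\mathscr S(s)=\mathscr S(t+s)$ for all dyadic $s,t$. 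To remove dyadicity I would use time-continuity: the sandwich $P_\eta f\le\mathscr S(\eta)f\le S(\eta)f$ — the upper bound because $\mathscr S(\eta)=\lim_m\mathscr S^m(\eta)\le S(\eta)$ once $2^{-m}>\eta$ — together with $P_\eta f\to f$ (stochastic continuity of the Lévy process) and $\|S(\eta)f-f\|_\infty\to0$ (first for Lipschitz $f$ via $\|S(\eta)f-f\|_\infty\le\mathrm{Lip}(f)(\int|y|\,\mu_\eta(dy)+\delta\eta)$, then for all $f\in C_0(\mathbb R^d)$ by density and $1$-Lipschitzness) gives $\|\mathscr S(\eta)f-f\|_\infty\to0$. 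The dyadic relation and this estimate make $u\mapsto\mathscr S(u)f$ continuous along dyadics; promoting this to continuity at every $u$, matching it with the a priori limit-definition, then extends $\mathscr S(t)\mathscr S(s)f=\mathscr S(t+s)f$ from the dense dyadic set to all $s,t$. I expect this passage from dyadic to arbitrary times — equivalently, partition-independence of the limit — to be the main obstacle: at non-dyadic times the concatenated and grid partitions of $[0,s+t]$ differ by a phase, and the naive per-step consistency defect (of order $\int|y|\,\mu_\eta(dy)$, i.e.\ only $O(\eta^{1/2})$ once a diffusive part is present) does not sum over a fine grid, so a direct comparison fails and the continuity route is what circumvents it. The measurable-selection step in the subadditivity argument is a further technical, though routine, point.
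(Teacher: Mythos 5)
Your proposal is correct and follows essentially the same route as the paper: your coupling-gluing subadditivity $S(s)\circ S(t)\le S(s+t)$ is the paper's key Lemma~\ref{lem:key} (stated there for a general convex penalty $\varphi$, of which your $\delta$-ball case is the specialization), your monotone limit with the Feller sandwich $S^\mu(t)f\le\mathscr{S}^n(t)f\le S(t)f$ matches Lemma~\ref{lem:monotonicity}, Corollary~\ref{cor:cal.S.properties} and Lemma~\ref{lem:uniform.at.zero}, and your dyadic-composition-plus-time-continuity passage to arbitrary times is exactly what the paper packages as Corollary~\ref{cor:uniform.at.zero.ddyadic} and Lemma~\ref{lem:Snfn.to.Sf}, whose proof rests on the factorization $\mathscr{S}^n(t)=\mathscr{S}^n(t_n)\circ\mathscr{S}^n(t-t_n)$ --- the one detail your ``matching with the a priori limit-definition'' step implicitly needs and which follows directly from the definition of $\mathscr{S}^n$. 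The differences (working with the sublinear case directly rather than the penalized version, and organizing the non-dyadic extension as density-plus-continuity rather than as a single convergence lemma) are organizational, not mathematical.
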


Now that the semigroup property is established, we can state our main result.

\begin{theorem}[Feynman-Kac]
	\label{thm:pde.intro}
	Let $f\in C_0(\mathbb{R}^d)$ and define $u\colon[0,\infty)\times\mathbb{R}^d\to\mathbb{R}$  via $u(t,x):=\mathscr{S}(t)f(x)$.
	Then $u$ is a viscosity solution of
	\[ \begin{cases}
	\partial_t u(t,x) = A^\mu u(t,\cdot)(x) + \delta|\nabla u(t,x)| &\text{for } (t,x)\in(0,\infty)\times\mathbb{R}^d,\\
	u(0,x)=f(x)&\text{for }x\in\mathbb{R}^d,
	\end{cases}\]
	where $A^\mu$ is the generator of the initial L\'evy process.
\end{theorem}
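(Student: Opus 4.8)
The plan is to reduce the theorem to a single one-step consistency estimate for $S$ and then to identify $u=\mathscr{S}f$ as a viscosity solution through a Barles--Souganidis half-relaxed-limits argument applied to the scheme $(\mathscr{S}^n)$, which by Proposition~\ref{prop:semigroup.intro} is monotone, contractive, and convergent to $\mathscr{S}$. The advantage of this route is that consistency is only ever tested against smooth functions, so I never need to differentiate the limiting semigroup $\mathscr{S}$ itself (whose intermediate iterates $S(2^{-n})^{\circ k}f$ need not be smooth). The consistency estimate reads: for every $\phi\in C_c^\infty(\mathbb{R}^d)$ (and, by the same proof, every bounded $C^2$ function),
\[ \sup_{x\in\mathbb{R}^d}\Big|\tfrac1h\big(S(h)\phi(x)-\phi(x)\big)-A^\mu\phi(x)-\delta|\nabla\phi(x)|\Big|\xrightarrow[h\downarrow0]{}0. \]
Splitting $S(h)\phi(x)-\phi(x)$ into the baseline increment $\sint\phi(x+y)\,\mu_h(dy)-\phi(x)$ and the robustness gain $S(h)\phi(x)-\sint\phi(x+y)\,\mu_h(dy)$, the first term over $h$ converges uniformly to $A^\mu\phi$ by the classical theory of L\'evy generators on $C_c^\infty$ (available since $\mu_1$, hence each $\mu_h$, has finite first moment). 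Everything then hinges on the gain.

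For the gain I would prove matching upper and lower bounds, both equal to $\delta h\,\big(\sint|\nabla\phi(x+y)|^q\,\mu_h(dy)\big)^{1/q}$ up to $o(h)$, where $q:=p/(p-1)$. The upper bound comes from Kantorovich/Lagrangian duality for the Wasserstein ball,
\[ S(h)\phi(x)=\inf_{\lambda\ge0}\Big\{\lambda(\delta h)^p+\sint\sup_{w}\big(\phi(x+y+w)-\lambda|w|^p\big)\,\mu_h(dy)\Big\}, \]
by Taylor-expanding the inner supremum and optimizing over $\lambda$; the near-optimal multiplier $\lambda$ diverges as $h\downarrow0$, so the maximizing displacement $w$ vanishes and the second-order remainder is $O(h^2)$ uniformly in $x$. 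For the lower bound I would insert the explicit competitor $\nu=(\mathrm{id}+T)_\#\mu_h$ with $T(y)\propto|\nabla\phi(x+y)|^{q-2}\nabla\phi(x+y)$, scaled so that its transport cost equals $(\delta h)^p$. Because the baseline law concentrates, i.e.\ $\mu_h\to\delta_0$ together with its moments, the gradient factor $\big(\sint|\nabla\phi(x+y)|^q\,\mu_h(dy)\big)^{1/q}$ converges to $|\nabla\phi(x)|$ uniformly in $x$, which produces exactly the term $\delta|\nabla\phi(x)|$.

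With consistency available I would conclude as follows. On the dyadic grid the scheme $u^n:=\mathscr{S}^nf$ satisfies $u^n(t_{k+1},\cdot)=S(2^{-n})[u^n(t_k,\cdot)]$, the operator $S(2^{-n})$ is monotone and satisfies $S(h)(\psi+c)=S(h)\psi+c$ for constants $c$, and $u^n\to u$ locally uniformly (the requisite equicontinuity following from the contractivity in Proposition~\ref{prop:semigroup.intro}). Taking a smooth $\varphi$ for which $u-\varphi$ has a strict local maximum at some $(t_0,x_0)$ with $t_0>0$, the half-relaxed-limit bookkeeping produces grid points $(s_n,y_n)\to(t_0,x_0)$ at which $u^n-\varphi$ is locally maximal; after modifying $\varphi$ outside a neighborhood so that it dominates $u^n$ globally (possible since $u^n,u\in C_0(\mathbb{R}^d)$), monotonicity, the scheme relation, and invariance under constants give $\varphi(s_n,y_n)\le S(2^{-n})[\varphi(s_n-2^{-n},\cdot)](y_n)$, and dividing by $2^{-n}$ and invoking consistency yields the subsolution inequality $\partial_t\varphi\le A^\mu\varphi+\delta|\nabla\varphi|$ at $(t_0,x_0)$; the supersolution inequality is symmetric. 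The initial condition $u(0,\cdot)=f$ is immediate from $\mathscr{S}(0)=\mathrm{id}$.

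The main obstacle is the consistency estimate, precisely because it is a Wasserstein sensitivity statement in which the radius $\delta h$ and the reference measure $\mu_h$ degenerate simultaneously, the latter collapsing to $\delta_0$. The usual first-order sensitivity results for distributionally robust problems fix the reference measure and so cannot be quoted directly; the two-sided bounds above must be made quantitative and uniform in $x$, with the higher-order remainders tamed through the divergence of the dual multiplier $\lambda$ and the attendant vanishing of the optimal displacement.
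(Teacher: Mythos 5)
Your proposal is correct in outline but takes a genuinely different route from the paper. The paper never works at the level of a monotone scheme: it first establishes $S(s)S(t-s)f\le S(t)f$ (Lemma \ref{lem:key}), deduces monotone (hence uniform) convergence of $\mathscr{S}^n$ and the semigroup property (Proposition \ref{prop:semigroup.convex}), and then differentiates the \emph{limit} semigroup directly: Proposition \ref{prop:generator} shows $\lim_{t\downarrow 0}t^{-1}(\mathscr{S}(t)f-f)=A^\mu f+\varphi^\ast(|\nabla f|)$ uniformly for $f\in D(A^\mu)\cap C_0^1(\mathbb{R}^d)$, after which Theorem \ref{thm:pde.intro} follows from a short abstract argument (Theorem \ref{thm:pde.convex}) using the semigroup identity $\mathscr{S}(t)=\mathscr{S}(h)\mathscr{S}(t-h)$. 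Because the paper differentiates $\mathscr{S}$ rather than $S$, its lower-bound competitor must survive iteration: it uses the translate $\nu_{2^{-n}}=\mu_{2^{-n}}\ast\delta_{a2^{-n}r}$, whose $k_n$-fold convolution is again a translate of $\mu_t$; your push-forward competitor $T(y)\propto|\nabla\phi(x+y)|^{p^\ast-2}\nabla\phi(x+y)$ works only for a single step, which is precisely why your Barles--Souganidis framing, where only one-step consistency of $S(h)$ is needed, is coherent. What your route buys: no generator computation for $\mathscr{S}$, and consistency tested only on smooth functions. What it costs: you still need the paper's semigroup analysis (Proposition \ref{prop:semigroup.intro}, i.e.\ Lemma \ref{lem:key} and its consequences) for the convergence $u^n\to u$, so the scheme viewpoint replaces only the second half of the paper's argument; and your duality-based upper bound (in the spirit of \cite{bartl2020robust, blanchet2019quantifying}) carries exactly the uniformity burdens you name --- simultaneous degeneration of the radius $\delta h$ and of $\mu_h$, points where $\nabla\phi$ is small so the near-optimal multiplier behaves badly, control of the maximizing displacement for all $p\in(1,\infty)$. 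The paper sidesteps all of this: its upper bound takes an optimal coupling $\pi$ of $\mu_t$ and $\nu$, applies a first-order Taylor expansion and H\"older's inequality to get $\int f\,d\nu-\int f\,d\mu_t\le\big(|\nabla f(x)|+o(1)\big)\mathcal{W}_p(\mu_t,\nu)$ with $\mathcal{W}_p(\mu_t,\nu)\le u_0t$, requiring only $C^1$ regularity and no multiplier asymptotics; you could import that estimate verbatim into your consistency step and drop the duality entirely.

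One caveat is substantive enough to flag. The theorem explicitly adopts the viscosity notion of \cite{denk2020semigroup}: test functions are curves $v\colon(0,\infty)\to C_0(\mathbb{R}^d)$, differentiable in the sup norm, with $v(t)\in D(A^\mu)\cap C_0^1(\mathbb{R}^d)$ and dominating $u$ globally. You prove the classical pointwise notion with bounded $C^2$ test functions (globally modified to dominate, as is standard for the nonlocal term $A^\mu$). Since $D(A^\mu)\cap C_0^1(\mathbb{R}^d)$ is a strictly larger test class than your smooth one, the paper's statement is formally stronger, and your conclusion does not literally coincide with the theorem as stated. This is repairable rather than fatal: once you replace the duality step by the coupling/H\"older bound above, your one-step consistency holds for every $g\in D(A^\mu)\cap C_0^1(\mathbb{R}^d)$ (only first-order Taylor is ever used, exactly as in Proposition \ref{prop:generator}), and with the globally dominating curve test functions of \cite{denk2020semigroup} the scheme bookkeeping simplifies to essentially the paper's proof of Theorem \ref{thm:pde.convex}. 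You should either carry out this adaptation or state explicitly that you prove the classical formulation.
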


Here $\nabla$ denotes the spatial derivative.
Moreover, the notion of viscosity solution we consider here is that of \cite{denk2020semigroup}, and we refer to the discussion before and after Theorem \ref{thm:pde.convex} below for the definition and comments on uniqueness.
\begin{remark}
	\label{intro:rem}
	Starting with an arbitrary L\'evy process, Theorem \ref{thm:pde.intro} ensures that the limiting
	semigroup $\mathscr{S}$ corresponds to a nonlinear L\'evy process with parametric drift uncertainty.
	The interesting feature of Theorem \ref{thm:pde.intro} is that, even though we consider \emph{nonparametric uncertainty} in discrete time 
	leading to robustifications that are structurally unconstrained, in the limit we end up with a process bearing only \emph{parametric uncertainty}, which is a drift-perturbed version of the initial L\'evy process. For instance, if we start with a Brownian motion  with generator $A^\mu=\tfrac{1}{2}\Delta$, we end up with a $g$-Brownian motion with generator $\tfrac{1}{2}\Delta+\delta|\nabla|$. More generally, if the initial L\'evy process is a Brownian motion with drift $\gamma\in\mathbb{R}^d$ and covariance matrix $\Sigma\in \mathbb{R}^{d\times d}$, then a quick computation shows that the PDE of Theorem \ref{thm:pde.intro} takes the form
	\[ \partial_t u(t,x)
	=\frac{1}{2} \sum_{i,j=1}^d \Sigma_{ij} \partial_{ij} u(t,x) +  \max_{\eta\in \Gamma }  \sum_{i=1}^d \eta_i \partial_i u(t,x), \]
	where $\Gamma:=\{\eta\in\mathbb{R}^d : |\eta-\gamma|\leq\delta\}$.
	In case of a $g$-Brownian motion, the solution of the PDE can be 
	represented in terms of a $g$-expectation, see \cite{peng1997}.
	Likewise, based on the theory of backward stochastic differential equations 
	with jumps, $g$-expectations also exist for certain jump filtrations (e.g., 
	the one generated by a Brownian motion and a Poisson random measure), see  
	\cite{kazi2016quadratic, liu2019jump, royer2006backward}.
	This leads to corresponding nonlinear processes with parametric drift 
	uncertainty. 
	Finally, we note that similar drift perturbations also arise for related scaling limits, see, e.g., \cite[Proposition 11]{pichler2020quantification}.
\end{remark}

In the following chapter we consider a convex generalization of the above 
setting: in the definition of $S(\Delta t_{n+1})$, instead of considering all $\nu$ in the 
$\delta \Delta t_{n+1}$ neighborhood of $\mu_{\Delta t_{n+1}}$, we take into account all $\nu$ but 
penalized by their distance to $\mu_{\Delta t_{n+1}}$.
In the limit this gives a convex semigroup for which the generator includes a 
convex perturbation in $\nabla u$ (instead of the absolute value), see Theorem 
\ref{thm:pde.convex}.

Finally, let us point out that numerical computation of nonlinear PDEs like the 
ones resulting from Theorem \ref{thm:pde.intro} and Remark \ref{intro:rem} has 
received a lot of attention in recent years and by now efficient methods are 
available, see, e.g., \cite{beck2019machine, raissi2018deep} and references 
therein.

\vspace{1em}
\noindent
{\bf Possible extensions and related literature.}
There are several natural variations of the results in this paper.
For instance, one can ask which effect additional constraints on the measures $\nu$ appearing in the definition of $S(t)$ might have.
Concretely, what would happen if one allows only for those $\nu$ which (additional to being in a Wasserstein neighborhood of $\mu_t$) have the same mean as $\mu_t$, or if one replaces the Wasserstein distance by its martingale version \cite{beiglbock2016problem}.
In the latter case, when changing the scaling of the radius from $\delta t$ to $\delta t^2$, one could guess the PDE to be
\[ \begin{cases}
\partial_t u(t,x) = \Delta  u(t,\cdot)(x) + \delta|\nabla^2 u(t,x)| &\text{for } (t,x)\in(0,\infty)\times\mathbb{R}^d,\\
u(0,x)=f(x)&\text{for }x\in\mathbb{R}^d,
\end{cases}\]
in case that the underlying L{\'e}vy process is the Brownian motion.
However, with the exact methods of this paper, this can be made rigorous only with a (unnatural) technical twist in definition of $S(t)$ and understanding the full picture would require considerations beyond the scope of this paper.

In a similar spirit, it would be interesting to start with transition probabilities $(\mu^n_t)_{t}$ which approximate $(\mu_t)_t$ (e.g.\ a Binomial random walk which converges to a Brownian motion).
A (parametric) variant of this was done by Dolinsky, Nutz, and Soner \cite{dolinsky2012weak} for Binomial random walks with freedom in the Bernoulli-parameter.
Related, one could ask whether Donsker-type results hold, i.e., whether the family of laws of the nonlinear random walks (on the path space) has a limiting family.

Finally, let us highlight the connection to distributionally robust optimization (DRO) using the Wasserstein distance.
In DRO, the basic task consists of computing $\inf_{\lambda} S(t)f^{\lambda}$, where $(f^\lambda)_\lambda$ is a parametrized family of function; we refer to \cite{bartl2020computational, blanchet2019quantifying, esfahani2018data, feng2018quantifying} for recent results and applications.
Here duality arguments often help to compute the (infinite dimensional) optimization problem appearing in the definition of $S(t)$.
In multi-step versions of DRO (e.g., time-consistent utility maximization with 
Markovian endowment \cite{bartl2019exponential}), the computation of 
$\mathscr{S}^n(t)f$ is the key element.
Related multi-step versions also occur in the literature on robust Markov chains with interval probabilities (see \cite{eckstein2019extended, vskulj2009discrete} and references therein), and in particular on robust Markov decision processes \cite{wiesemann2013robust, yang2017convex}.
As $\mathscr{S}(t)f$ can be seen as a proxy for $\mathscr{S}^n(t)f$ for large $n$, a natural question is whether the results in the current paper can be used as an approximation tool for these multi-step versions of DRO.
This also motivates studying the speed of convergence $\mathscr{S}^n(t)f \rightarrow \mathscr{S}(t)f$.

\section{Convex version and proof of main results}

Let $\varphi\colon [0,+\infty)\to[0,+\infty]$ be a lower semicontinuous, convex, and increasing function which is not constant and such that $\varphi(0)=0$.
Assume that $x\mapsto \varphi(x^{1/p})$ is convex, denote by $\varphi^\ast(y):=\sup_{x\geq 0} (xy-\varphi(x))$ for $y\geq 0$ its convex conjugate, and set $\varphi(+\infty):=\varphi^\ast(+\infty)=+\infty$.
For every $f\in C_0(\mathbb{R}^d)$ and $t\geq 0$, we define
\begin{equation}
\label{def:S}
S(t) f(x)
=\sup_{\nu\in\mathcal{P}_p(\mathbb{R}^d)} \Big( \int_{\mathbb{R}^d} f(x+y)\,\nu(dy) - \varphi_t(\mathcal{W}_p(\mu_t,\nu))\Big),
\end{equation}
where $\varphi_t(\cdot):=t\varphi(\cdot/t)$ for $t>0$, $\varphi_0(a) = +\infty$ for $a > 0$, and $\varphi_0(0) = 0$.
The results stated in the introduction will follow from the choice $\varphi:= +\infty 1_{(\delta,+\infty)}$, in which case $\varphi^\ast(y)=\delta y$ for all $y\geq 0$.
Notice that the supremum in \eqref{def:S} can also be taken over the set
\begin{align}
\label{eq:def.delta.t}
\Delta_{f,t} := \{\nu \in \mathcal{P}_p(\mathbb{R}^d) : \varphi_t(\mathcal{W}_p(\mu_t, \nu)) \leq \|f\|_\infty + 1\}.
\end{align}
As $\varphi_t$ is increasing and unbounded, this implies that there is a uniform upper bound on $\mathcal{W}_p(\mu_t, \nu)$ over $\nu\in\Delta_{f,t}$.
Hence, by the following simple observation, the set $\Delta_{f,t}$ is tight, and therefore the supremum in \eqref{def:S} is attained\footnote{Indeed, the set $\Delta_{f,t}$ is weakly compact by Prokhorov's theorem and lower semicontinuity of $\nu\mapsto \varphi_t(\mathcal{W}_p(\mu_t,\nu))$.}.
\begin{lemma}
	\label{lem:markov}
	For every  $\nu\in\mathcal{P}_p(\mathbb{R}^d)$, $t\geq 0$, and $c>0$, we have that
	\[\nu(\{y\in\mathbb{R}^d : |y|\geq c\})
	\leq \frac{1}{c}\Big( \mathcal{W}_p(\mu_t,\nu) + \Big(\int_{\mathbb{R}^d} |x|^p\,\mu_t(dx)\Big)^{1/p} \Big). \]
\end{lemma}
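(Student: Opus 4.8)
The plan is to bound the tail probability $\nu(\{|y|\geq c\})$ using an optimal coupling between $\mu_t$ and $\nu$, translating the Wasserstein distance into control over where mass can move. First I would fix an optimal (or near-optimal) coupling $\gamma\in\mathrm{Cpl}(\mu_t,\nu)$ so that $\int |y-x|^p\,\gamma(dx,dy)=\mathcal{W}_p(\mu_t,\nu)^p$; such a minimizer exists, but even if one prefers to avoid citing attainment, an $\varepsilon$-optimal coupling suffices and $\varepsilon$ can be sent to zero at the end.

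The key estimate uses the triangle inequality $|y|\leq |y-x|+|x|$ pointwise under the coupling, so that the event $\{|y|\geq c\}$ forces $|y-x|+|x|\geq c$. I would write
\[ \nu(\{|y|\geq c\}) = \int_{\mathbb{R}^d\times\mathbb{R}^d} 1_{\{|y|\geq c\}}\,\gamma(dx,dy) \leq \frac{1}{c}\int_{\mathbb{R}^d\times\mathbb{R}^d} |y|\,1_{\{|y|\geq c\}}\,\gamma(dx,dy) \leq \frac{1}{c}\int_{\mathbb{R}^d\times\mathbb{R}^d} \big(|y-x|+|x|\big)\,\gamma(dx,dy), \]
applying the elementary Markov-type bound $1_{\{|y|\geq c\}}\leq |y|/c$ and then the triangle inequality. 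The two resulting integrals are handled separately: the marginal property gives $\int |x|\,\gamma(dx,dy)=\int |x|\,\mu_t(dx)$, which by Jensen (or Hölder with exponents $p$ and $p'$, using that $\mu_t$ is a probability measure) is at most $(\int |x|^p\,\mu_t(dx))^{1/p}$, and likewise $\int |y-x|\,\gamma(dx,dy)\leq (\int |y-x|^p\,\gamma(dx,dy))^{1/p}=\mathcal{W}_p(\mu_t,\nu)$.

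Assembling these two bounds yields exactly the claimed inequality. I do not anticipate a serious obstacle here, since the statement is essentially a layered application of Markov's inequality, the triangle inequality, and Jensen's inequality along an optimal coupling; the only point requiring a line of care is the passage from $p$-th moments to first moments via Jensen, which relies crucially on $p>1$ and on $\gamma$, $\mu_t$ being probability measures. If one wishes to sidestep the existence of an optimal coupling, the cleanest route is to run the same computation with an arbitrary $\gamma\in\mathrm{Cpl}(\mu_t,\nu)$, obtaining $\nu(\{|y|\geq c\})\leq \frac{1}{c}\big((\int|y-x|^p\,\gamma)^{1/p}+(\int|x|^p\,\mu_t)^{1/p}\big)$, and then take the infimum over $\gamma$, so that the middle term converges down to $\mathcal{W}_p(\mu_t,\nu)$.
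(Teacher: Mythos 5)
Your proposal is correct, and it proves exactly the claimed inequality; the only mild care points (attainment of the optimal coupling, finiteness of the first moments) you handle adequately, the $\varepsilon$-optimal fallback in your last sentence being a perfectly valid substitute for attainment. The paper's own proof is different in execution, and shorter: it first applies Markov's and H\"older's inequalities to get $\nu(\{y : |y|\geq c\})\leq c^{-1}\big(\int_{\mathbb{R}^d}|y|^p\,\nu(dy)\big)^{1/p}$, then recognizes the right-hand side as $c^{-1}\mathcal{W}_p(\delta_0,\nu)$ and concludes by the triangle inequality for the metric $\mathcal{W}_p$, namely $\mathcal{W}_p(\delta_0,\nu)\leq\mathcal{W}_p(\delta_0,\mu_t)+\mathcal{W}_p(\mu_t,\nu)$ together with $\mathcal{W}_p(\delta_0,\mu_t)=\big(\int_{\mathbb{R}^d}|x|^p\,\mu_t(dx)\big)^{1/p}$. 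You instead unpack everything at the level of an explicit coupling: you split pointwise via $|y|\leq|y-x|+|x|$ under $\gamma$ and then upgrade each first-moment term to a $p$-th moment by Jensen. In effect you perform the splitting in $L^1(\gamma)$ and apply H\"older afterwards, whereas the paper applies H\"older once and splits in $L^p$ via the metric axiom for $\mathcal{W}_p$; the two orderings give the same bound. What your route buys is self-containedness --- you never invoke the triangle inequality for $\mathcal{W}_p$ (whose general proof itself rests on a gluing construction) nor the existence of an optimal coupling. What the paper's route buys is brevity and the conceptually useful identity that the $p$-th moment of a measure is its Wasserstein distance to $\delta_0$, an identity the paper reuses elsewhere (in the continuity of $t\mapsto\mu_t$ discussed before Lemma \ref{lem:uniform.at.zero}, and in part (b) of the proof of Proposition \ref{prop:generator}). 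Since one endpoint is the point mass $\delta_0$, the triangle inequality used by the paper is essentially trivial to verify by hand, so in this special case your coupling argument can be seen as that same estimate written out explicitly.
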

\begin{proof}
	An application of Markov's and H\"older's inequality implies that $\nu(\{y\in\mathbb{R}^d : |y|\geq c\})\leq c^{-1} (\int_{\mathbb{R}^d} |y|^p\,\nu(dy))^{1/p}$.
	The latter equals $c^{-1}\mathcal{W}_p(\delta_0,\nu)$, so that the proof is completed by the triangle inequality for $\mathcal{W}_p$.
\end{proof}

For further reference, we provide the proof of the following simple observation.

\begin{lemma}
	\label{lem:helpphi}
	For every $c\geq 0$, we have that $\lim_{t\downarrow 0}\sup\{ r\geq 0 : \varphi_t(r)\leq c\}=0$.
\end{lemma}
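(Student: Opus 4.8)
The plan is to show that the hypothesis that $x\mapsto \varphi(x^{1/p})$ is convex forces $\varphi$ to grow at least like $s\mapsto s^p$, and then to exploit the scaling $\varphi_t(r)=t\varphi(r/t)$ together with $p>1$ to conclude. Writing $R_t(c):=\sup\{r\geq 0:\varphi_t(r)\leq c\}$, the goal is to establish $\lim_{t\downarrow 0}R_t(c)=0$.

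First I would record a superlinear growth estimate for $\varphi$. Set $g(x):=\varphi(x^{1/p})$, which by assumption is convex and increasing with $g(0)=\varphi(0)=0$ and $g$ not constant. For a convex function vanishing at the origin the secant slope $x\mapsto g(x)/x$ is nondecreasing on $(0,\infty)$ (from $g(\theta y)\leq \theta g(y)$ with $\theta=x/y$); since $g$ is not constant there is some $x_0>0$ with $g(x_0)>0$, whence $g(x)\geq mx$ for all $x\geq x_0$, where $m:=g(x_0)/x_0>0$. Translating back via $\varphi(s)=g(s^p)$ yields the bound $\varphi(s)\geq m\,s^p$ for all $s\geq s_0:=x_0^{1/p}$.

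Next I would feed this into the scaling. For any $r>\max\{t s_0,\ (c\,t^{p-1}/m)^{1/p}\}$ one has $r/t>s_0$, and therefore $\varphi_t(r)=t\varphi(r/t)\geq t\,m\,(r/t)^p=m\,r^p\,t^{1-p}>c$. Hence every such $r$ violates the constraint $\varphi_t(r)\leq c$, which gives $R_t(c)\leq \max\{t s_0,\ (c\,t^{p-1}/m)^{1/p}\}$. Since $p>1$, both terms tend to $0$ as $t\downarrow 0$ (the first linearly, the second like $t^{(p-1)/p}$), and the degenerate case $c=0$ is automatically covered as the second term then vanishes. This proves $R_t(c)\to 0$.

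The only subtle point — and what I would flag as the main obstacle — is that convexity and monotonicity of $\varphi$ alone are insufficient: if $\varphi$ grew merely linearly (e.g.\ $\varphi(s)=s$), then $\varphi_t$ would be independent of $t$ and $R_t(c)\equiv c$ would fail to vanish. The hypothesis that $x\mapsto\varphi(x^{1/p})$ is convex is precisely what excludes this, upgrading the linear lower bound to genuine $p$-growth; it is then the exponent $p>1$ that makes the factor $t^{1-p}$ blow up and drives $R_t(c)$ to zero. Everything else in the argument is elementary.
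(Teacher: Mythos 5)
Your proof is correct, and it rests on the same two-step strategy as the paper's: extract $p$-th power growth of $\varphi$ from the convexity of $x\mapsto\varphi(x^{1/p})$, then let the scaling $\varphi_t(r)=t\varphi(r/t)$ together with $p>1$ do the work. The difference lies in how the growth bound is obtained and in the final bookkeeping. The paper invokes the Fenchel--Moreau theorem to produce an affine minorant $\varphi(x^{1/p})\geq ax+b$ with $a>0$, so that $\varphi_t(r)\geq ar^pt^{1-p}+tb\to\infty$ for each \emph{fixed} $r>0$, and then concludes (leaving implicit the easy step that pointwise divergence of $\varphi_t(r)$ for every fixed $r>0$ forces the supremum to zero). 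You instead derive $g(x)\geq mx$ for $x\geq x_0$ from the elementary monotonicity of secant slopes of a convex function vanishing at the origin --- no duality theorem needed --- and you convert this into the explicit estimate $\sup\{r\geq 0:\varphi_t(r)\leq c\}\leq\max\{ts_0,\,(c\,t^{p-1}/m)^{1/p}\}$. That quantitative form is a genuine bonus: it yields the rate $O(t^{(p-1)/p})$, which is of interest for the speed-of-convergence question raised at the end of the paper's introduction, and it removes the implicit step in the paper's conclusion. One point of hygiene worth adding to your write-up: since $\varphi$ may take the value $+\infty$ (the paper's central example is $\varphi=+\infty 1_{(\delta,+\infty)}$), your chosen $x_0$ with $g(x_0)>0$ may have $g(x_0)=+\infty$, i.e.\ $m=+\infty$; the argument survives if the inequalities are read in $[0,+\infty]$ (then $\varphi_t(r)=+\infty>c$ whenever $r>ts_0$, and the second term in your maximum is zero), but this degenerate case deserves a sentence rather than silence.
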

\begin{proof}
	By assumption $x\mapsto \varphi(\max\{x,0\}^{1/p})$ is a convex lower semicontinuous function, which is not constant equal to zero.
	Therefore, by the Fenchel-Moreau theorem there exist $a>0$ and $b\in\mathbb{R}$, such that $\varphi(x^{1/p})\geq a x + b$ for all $x\geq 0$.
	Thus, for every given $r>0$, we conclude that
	\[ \varphi_t(r)
	=t\varphi\big(\tfrac{r}{t}\big)
	\geq ta\big(\tfrac{r}{t} \big)^p + tb. \]
	As $p>1$, this term converges to infinity when $t$ converges to zero.
\end{proof}

Directly from the definition, the operator $S(t)$ has the following properties.
\begin{lemma}
	\label{lem:S.maps.BUC.to.BUC}
	Let $t\geq 0$ and $f,g\in C_0(\mathbb{R}^d)$ such that $f\leq g$.
	Then, $S(t)$ is a convex contraction on $C_0(\mathbb{R}^d)$, which satisfies $S(t) 0 = 0$, $S(t)f\leq S(t)g$. Further, $S(t)f$ has the same modulus of continuity as $f$.
\end{lemma}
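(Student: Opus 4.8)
The plan is to read off all the stated properties from the representation \eqref{def:S} of $S(t)f(x)$ as a pointwise supremum over $\nu\in\mathcal P_p(\mathbb R^d)$ of the functionals $f\mapsto \int_{\mathbb R^d} f(x+y)\,\nu(dy)-\varphi_t(\mathcal W_p(\mu_t,\nu))$, each of which is affine and increasing in $f$ and whose penalty term does not depend on $x$. First, since a pointwise supremum of affine maps is convex, $f\mapsto S(t)f(x)$ is convex for every $x$. Monotonicity $S(t)f\le S(t)g$ for $f\le g$ is immediate because each functional is increasing in $f$. For $S(t)0=0$ I would plug $f=0$ into \eqref{def:S}: the supremum becomes $-\inf_\nu \varphi_t(\mathcal W_p(\mu_t,\nu))$, which equals $0$ since $\varphi_t\ge 0$ and the value $0$ is attained at $\nu=\mu_t$, using $\varphi_t(0)=0$. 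For the contraction property I would note that at fixed $\nu$ the penalty is the same for $f$ and for $g$, while $\int f(x+y)\,\nu(dy)\le \int g(x+y)\,\nu(dy)+\|f-g\|_\infty$; hence the $\nu$-functional for $f$ is dominated by that for $g$ plus $\|f-g\|_\infty$, and taking the supremum over $\nu$ and then exchanging the roles of $f$ and $g$ yields $\|S(t)f-S(t)g\|_\infty\le\|f-g\|_\infty$.

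Next I would treat the modulus of continuity. Since $f\in C_0(\mathbb R^d)$ is uniformly continuous it admits a modulus of continuity $\omega$, and for every $\nu$ and all $x,x'\in\mathbb R^d$ one has $|\int f(x+y)\,\nu(dy)-\int f(x'+y)\,\nu(dy)|\le \int |f(x+y)-f(x'+y)|\,\nu(dy)\le \omega(|x-x'|)$. As this estimate is uniform in $\nu$ and the penalty term is independent of the spatial variable, it passes to the supremum and shows that $S(t)f$ has the same modulus of continuity $\omega$; in particular $S(t)f$ is (uniformly) continuous.

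The remaining and genuinely nontrivial point is that $S(t)f$ vanishes at infinity, so that $S(t)$ indeed maps $C_0(\mathbb R^d)$ into itself. Here I would use the observation recorded before the statement, that the supremum in \eqref{def:S} may be restricted to the set $\Delta_{f,t}$ of \eqref{eq:def.delta.t}, which by Lemma~\ref{lem:markov} carries a uniform bound on $\mathcal W_p(\mu_t,\nu)$ and is therefore uniformly tight. Fix $\varepsilon>0$, choose $c>0$ with $\nu(\{|y|\ge c\})<\varepsilon$ for all $\nu\in\Delta_{f,t}$, and choose $L$ with $|f(z)|<\varepsilon$ for $|z|\ge L$. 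For $|x|>c+L$ I would split $\int f(x+y)\,\nu(dy)$ over $\{|y|<c\}$ and $\{|y|\ge c\}$: on the first set $|x+y|>L$, so the integrand is below $\varepsilon$, while the second contributes at most $\|f\|_\infty\varepsilon$. Since $\varphi_t\ge 0$, this bounds $S(t)f(x)$ from above by $\varepsilon(1+\|f\|_\infty)$ uniformly for large $|x|$; testing with $\nu=\mu_t\in\Delta_{f,t}$ and the same split gives the matching lower bound, so $S(t)f(x)\to 0$ as $|x|\to\infty$.

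The main obstacle is precisely this last step: the supremum runs over an infinite-dimensional, noncompact family of measures, so the decay of $f$ at infinity does not obviously transfer to $S(t)f$. The point that makes it work is Lemma~\ref{lem:markov}, which upgrades the $\mathcal W_p$-bound on the competitor measures to \emph{uniform} tightness and thereby lets the single ball radius $c$ control all admissible $\nu$ simultaneously. The degenerate case $t=0$ is trivial, since then $\varphi_0$ forces $\nu=\mu_0=\delta_0$ and $S(0)$ is the identity, for which every claim is immediate.
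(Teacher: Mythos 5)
Your proof is correct and follows essentially the same route as the paper's: convexity, monotonicity, $S(t)0=0$ and contractivity are read off directly from the supremum representation \eqref{def:S}, and vanishing at infinity is obtained exactly as in the paper by restricting the supremum to $\Delta_{f,t}$, invoking Lemma \ref{lem:markov} for uniform tightness, splitting the integral, and getting the matching lower bound by testing with $\nu=\mu_t$ (using $\varphi_t(0)=0$). The only cosmetic difference is the modulus-of-continuity step, which you prove by an estimate uniform in $\nu$ inside the supremum, whereas the paper deduces it from contractivity together with the translation identity $S(t)f(x)=S(t)f(x+\cdot)(0)$ --- the two arguments are interchangeable.
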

\begin{proof}
	It is clear by definition that $S(t)$ is convex and monotone.
	Moreover, as $\inf \varphi_t=0$, it follows that $S(t)0=0$.
	To show that $S(t)$ is a contraction, note that
	\[\int_{\mathbb{R}^d} f(x+y)\,\nu(dy)\leq \int_{\mathbb{R}^d} g(x+y)\,\nu(dy)+ \|f-g\|_\infty\] for all $\nu\in\mathcal{P}_p(\mathbb{R}^d)$ and $x\in\mathbb{R}^d$.
	Hence, $S(t)f(x)\leq S(t)g(x)+ \|f-g\|_\infty$ for all $x\in\mathbb{R}^d$, and changing the role of $f$ and $g$ yields contractivity.
	
	It remains to prove that $S(t)f\in C_0(\mathbb{R}^d)$. First, since $f$ is 
	in particular uniformly continuous it follows that $S(t)f$ is also 
	uniformly continuous. To that end,
	let $\varepsilon>0$ be arbitrary and fix $\delta>0$ such that $|f(x)-f(y)|\leq \varepsilon$ for $x,y\in\mathbb{R}^d$ with $|x-y|\leq\delta$.
	Then, for every such pair $x,y$, contractivity of $S(t)$ implies that
	\begin{align*}
	S(t)f(x)
	&=S(t) f(x+\cdot)(0) \\
	&\leq S(t)f(y+\cdot)(0) + \|f(x+\cdot) - f(y+\cdot)\|_\infty\\
	&\leq S(t)f(y)+\varepsilon.
	\end{align*}
	Replacing the role of $x$ and $y$ shows that $S(t)f$ is uniformly continuous with the same modulus of continuity as $f$.
	
	Second, we prove that $S(t)f$ is vanishing at infinity. Let $\varepsilon>0$ be arbitrary and fix $a\geq 0$ such that $|f(x)|\leq \varepsilon$ for all $x\in\mathbb{R}^d$ with $|x|\geq a$.
	Since $\mathcal{W}_p(\mu_t, \nu)$ is uniformly bounded over $\nu\in\Delta_{f,t}$, it follows form
	Lemma \ref{lem:markov} that there is $b> 0$ such that $\nu(\{ y\in\mathbb{R}^d : |y|> b\})\leq \varepsilon$ uniformly over $\nu\in\Delta_{f,t}$.
	Hence,
	\[
	S(t) f(x) \leq \sup_{\nu \in \Delta_{t}} \int_{\mathbb{R}^d} f(x+y) 1_{\{|y|\leq b\}} + f(x+y) 1_{\{|y|> b\}}  \,\nu(dy)
	\leq \varepsilon + \varepsilon \|f\|_\infty
	\]
	for all $x\in\mathbb{R}^d$ such that $|x|\geq a+b$.
	For the reverse inequality, use that $S(t)f(x)\geq \int_{\mathbb{R}^d} f(x+y)\,\mu_t(dy)$ for all $x\in\mathbb{R}^d$, which follows from $\varphi(0)=0$.
	Therefore the same arguments as above show that $S(t)f(x)\geq -\varepsilon - \varepsilon\|f\|_\infty$ for all $x\in\mathbb{R}^d$ such that $|x|\geq a+b$.
	As $\varepsilon$ was arbitrary, the claim follows.
\end{proof}

At this point we know that $S(t)$ maps $C_0(\mathbb{R}^d)$ to itself, which allows us to define $S(t)\circ S(s)$, or more generally $\mathscr{S}^n$ as in \eqref{eq:def.cal.S.intro}. The following is the key result for our analysis, and allows in particular to define the limit $\lim_{n\to\infty}\mathscr{S}^n$.

\begin{lemma}
	\label{lem:key}
	For every $0 < s < t$ and $f\in C_0(\mathbb{R}^d)$, we have that
	\[ S(s) S(t-s) f \leq S(t) f.\]
	\begin{proof}
		Fix $f\in C_0(\mathbb{R}^d)$ and $x\in\mathbb{R}^d$.
		Let $\nu_s(db)\in\mathcal{P}_p(\mathbb{R}^d)$ such that
		\[ S(s) S(t-s) f(x)=\int_{\mathbb{R}^d} S(t-s) f(x+b)\,\nu_s(db) - \varphi_s(\mathcal{W}_p(\mu_s,\nu_s)) \]
		and let $\gamma_s(da,db)\in\mathcal{P}_p(\mathbb{R}^d\times\mathbb{R}^d)$ be an optimal coupling between $\mu_s(da)$ and $\nu_s(db)$.
		Similarly, for each $b\in\mathbb{R}^d$, let $\nu_{t-s}^b(de)\in\mathcal{P}_p(\mathbb{R}^d)$ be such that
		\begin{align*}
		S(t-s)f(b)&=\int_{\mathbb{R}^d} f(b+e)\,\nu_{t-s}^b(de)- \varphi_{t-s}( \mathcal{W}_p(\mu_{t-s},\nu_{t-s}^b))
		\end{align*}
		and  let $\gamma^b_{t-s}(dc,de)\in\mathcal{P}_p(\mathbb{R}^d\times\mathbb{R}^d)$ be an optimal coupling between $\mu_{t-s}(dc)$ and $\nu_{t-s}^b(de)$. Now define the measure $\gamma_t(dy,dz)\in\mathcal{P}_p(\mathbb{R}^d\times\mathbb{R}^d)$ by
		\[\int_{\mathbb{R}^d\times\mathbb{R}^d} h(y,z)\,\gamma_t(dy,dz):= \int_{\mathbb{R}^d\times\mathbb{R}^d}\int_{\mathbb{R}^d\times\mathbb{R}^d} h(a+c,b+e)\,\gamma_{t-s}^b(dc ,de )\, \gamma_s(da,db)\]
		for all $h\colon\mathbb{R}^d\times\mathbb{R}^d\to\mathbb{R}$ bounded and Borel (we have ignored the fact that $b\mapsto \gamma_{t-s}^b$ needs to be $\gamma_s$-measurable for this expression to make sense, but this can be shown by usual measurable selection arguments).
		Denoting by $\nu_t(dz):=\gamma_t(dz)$ the second marginal of $\gamma_t$, it holds
		\[ S(t) f(x)
		\geq \int_{\mathbb{R}^d} f(x+z)\,\nu_t(dz) - \varphi_t(\mathcal{W}_p(\mu_t,\nu_t)). \]
		Further, $\gamma_t(dy,dz)$ is a coupling between $\mu_t(dy)$ and $\nu_t(dz)$. Indeed,
		by definition $\gamma_t(dz)=\nu_t(dz)$, and $\gamma_t(dy)=\mu_t(dy)$ as
		\begin{align*}
		\gamma_t(A\times\mathbb{R}^d)
		&=\int_{\mathbb{R}^d\times\mathbb{R}^d}\int_{\mathbb{R}^d\times\mathbb{R}^d} 1_{A}(a+c)\,\gamma_{t-s}^b(dc, de)\,\gamma_s(da,db)\\
		&=\int_{\mathbb{R}^d}\int_{\mathbb{R}^d} 1_A(a+c) \,\mu_{t-s}(dc)\,\mu_{s}(da)
		=(\mu_s\ast\mu_{t-s})(A)
		=\mu_t(A)
		\end{align*}
		for every Borel set $A\subset\mathbb{R}^d$. 
		Similarly, we obtain
		\[
		\nu_t(B)=\gamma_t(\mathbb{R}^d\times B)=\int_{\mathbb{R}^d}\int_{\mathbb{R}^d} 1_B(b+e) \,\nu^b_{t-s}(de)\,\nu_{s}(db)
		\]
		for every Borel set $B\subset\mathbb{R}^d$. Moreover, by definition of the $p$-th Wasserstein distance it holds
		\begin{align*}
		&\mathcal{W}_p(\mu_t,\nu_t)
		\leq \Big(\int_{\mathbb{R}^d \times \mathbb{R}^d} |y-z|^p\,\gamma_t(dy,dz)\Big)^{1/p}\\
		&= \Big(\int_{\mathbb{R}^d \times \mathbb{R}^d}\int_{\mathbb{R}^d \times \mathbb{R}^d} |(b-a)+(e-c)|^p\,\gamma_{t-s}^b(dc,de)\gamma_s(da,db)\Big)^{1/p} \\
		&\leq \Big( \int_{\mathbb{R}^d \times \mathbb{R}^d} |b-a|^p\,\gamma_s(da,db)\Big)^{1/p} + \Big(\int_{\mathbb{R}^d \times \mathbb{R}^d}\int_{\mathbb{R}^d \times \mathbb{R}^d} |e-c|^p\,\gamma_{t-s}^b(dc,de)\,\gamma_s(da,db)\Big)^{1/p}.
		\end{align*}
		Denote by $I$ the first term in the above equation and by $J$ the second one.
		By definition of $\varphi_t=t\varphi(\cdot/t)$, together with the fact that $\varphi$ is convex and increasing, it holds
		\begin{align*}
		\varphi_t(\mathcal{W}_p(\mu_t,\nu_t))
		&\leq t\varphi\Big(\frac{s}{t}\frac{1}{s} I + \frac{t-s}{t}\frac{1}{t-s} J\Big) \\
		&\leq s\varphi\Big(\frac{1}{s} I\Big) + (t-s)\varphi\Big(\frac{1}{t-s} J\Big)
		=\varphi_s(I)+\varphi_{t-s}(J).
		\end{align*}
		Moreover, convexity of $x\mapsto \varphi(x^{1/p})$ implies convexity of $x\mapsto \varphi_{t-s}(x^{1/p})$.
		Therefore, by Jensen's inequality, we obtain
		\[ \varphi_{t-s}(J)\leq \int_{\mathbb{R}^d\times\mathbb{R}^d} \varphi_{t-s}\bigg( \Big(\int_{\mathbb{R}^d\times\mathbb{R}^d} |e-c|^p\, \gamma_{t-s}^b(dc,de) \Big)^{1/p}\bigg)\,\gamma_s(da,db).\]
		Recalling the definitions of $I$ and $J$ and that $\gamma_s$ and $\gamma^b_{t-s}$ are optimal couplings, we conclude
		\begin{align*}
		\varphi_t(\mathcal{W}_p(\mu_t,\nu_t))
		&\leq\varphi_s(\mathcal{W}_p(\mu_s,\nu_s)) +\int_{\mathbb{R}^d} \varphi_{t-s}(\mathcal{W}_p(\mu_{t-s},\nu_{t-s}^b))\,\nu_s(db).
		\end{align*}
		Putting everything together, we obtain
		\begin{align*}
		&S(t) f(x)
		\geq \int_{\mathbb{R}^d} f(x+z)\,\nu_t(dz) - \varphi_s(\mathcal{W}_p(\mu_s,\nu_s)) -\int_{\mathbb{R}^d} \varphi_{t-s}(\mathcal{W}_p(\mu_{t-s},\nu_{t-s}^{b+x}))\,\nu_s(db) \\
		&= \int_{\mathbb{R}^d}\Big(\int_{\mathbb{R}^d} f(x+b+e)\,\nu^{b+x}_{t-s}(de) -\varphi_{t-s}(\mathcal{W}_p(\mu_{t-s},\nu_{t-s}^{b+x}))\Big)\,\nu_s(db)  - \varphi_s(\mathcal{W}_p(\mu_s,\nu_s)) \\
		&= S(s) S(t-s) f(x)
		\end{align*}
		This completes the proof.
	\end{proof}
\end{lemma}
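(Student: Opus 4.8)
The plan is to exploit the supremum structure of $S(t)f(x)$: since it is a supremum over all $\nu\in\mathcal{P}_p(\mathbb{R}^d)$, it suffices to exhibit a single measure $\nu_t$ whose objective $\int f(x+z)\,\nu_t(dz)-\varphi_t(\mathcal{W}_p(\mu_t,\nu_t))$ already dominates $S(s)S(t-s)f(x)$. To build $\nu_t$, I would first fix optimizers of the nested left-hand side: an optimal $\nu_s$ for the outer operator $S(s)$ applied to $S(t-s)f$ at $x$, and, for each $b$, an optimal $\nu_{t-s}^{b}$ for $S(t-s)f(x+b)$ (these exist by the weak-compactness argument recorded above). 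Choosing optimal couplings $\gamma_s\in\mathrm{Cpl}(\mu_s,\nu_s)$ and $\gamma_{t-s}^{b}\in\mathrm{Cpl}(\mu_{t-s},\nu_{t-s}^{b})$, I would then define $\gamma_t$ as the law of $(a+c,\,b+e)$ when $(a,b)\sim\gamma_s$ and $(c,e)\sim\gamma_{t-s}^{b}$, and take $\nu_t$ to be its second marginal. A measurable-selection argument is needed to render $b\mapsto\gamma_{t-s}^{b}$ measurable; I would flag this as routine.

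The crucial structural input is the L\'evy/convolution identity $\mu_t=\mu_s\ast\mu_{t-s}$. Using it I would verify that the first marginal of $\gamma_t$ is exactly $\mu_t$: conditionally on $b$ the $c$-component has law $\mu_{t-s}$ and is added to $a\sim\mu_s$, so $a+c$ has law $\mu_s\ast\mu_{t-s}=\mu_t$. Hence $\gamma_t\in\mathrm{Cpl}(\mu_t,\nu_t)$, and by definition of the Wasserstein distance $\mathcal{W}_p(\mu_t,\nu_t)\leq(\int|y-z|^p\,\gamma_t)^{1/p}$. Writing $y-z=(a-b)+(c-e)$ and applying Minkowski's inequality in $L^p(\gamma_t)$ bounds the right-hand side by $I+J$, where $I=(\int|a-b|^p\,\gamma_s)^{1/p}=\mathcal{W}_p(\mu_s,\nu_s)$ by optimality of $\gamma_s$, and $J=(\int\!\int|c-e|^p\,\gamma_{t-s}^{b}\,\gamma_s)^{1/p}$.

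It remains to control the penalty $\varphi_t(\mathcal{W}_p(\mu_t,\nu_t))$, and this is where the two convexity hypotheses on $\varphi$ enter and where I expect the main difficulty to lie. First, using the scaling $\varphi_t=t\varphi(\cdot/t)$ together with convexity of $\varphi$, I would write $(I+J)/t$ as the convex combination $\tfrac{s}{t}(I/s)+\tfrac{t-s}{t}(J/(t-s))$ to obtain the subadditive splitting $\varphi_t(I+J)\leq\varphi_s(I)+\varphi_{t-s}(J)$; monotonicity of $\varphi$ then upgrades $\mathcal{W}_p(\mu_t,\nu_t)\leq I+J$ into the desired bound on $\varphi_t(\mathcal{W}_p(\mu_t,\nu_t))$. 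Second, to relate $\varphi_{t-s}(J)$ to the inner penalties $\varphi_{t-s}(\mathcal{W}_p(\mu_{t-s},\nu_{t-s}^{b}))$, I would use that $x\mapsto\varphi_{t-s}(x^{1/p})$ is convex (inherited from convexity of $x\mapsto\varphi(x^{1/p})$) and apply Jensen's inequality against $\gamma_s$, turning $J^p$ (an average of the inner transport costs) into the corresponding $\nu_s$-average of $\varphi_{t-s}(\mathcal{W}_p(\mu_{t-s},\nu_{t-s}^{b}))$. The genuinely delicate point is precisely this matching: the time-scaling of $\varphi$ must interact with both convexity assumptions so that the penalty splits compatibly with the nested objective.

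Finally I would assemble the pieces. Since $\int f(x+z)\,\nu_t(dz)=\int\!\int f(x+b+e)\,\nu_{t-s}^{b}(de)\,\nu_s(db)$ and $\nu_{t-s}^{b}$ is optimal for $S(t-s)f(x+b)$, the penalty bound yields that the objective evaluated at $\nu_t$ is at least $\int(\int f(x+b+e)\,\nu_{t-s}^{b}(de)-\varphi_{t-s}(\mathcal{W}_p(\mu_{t-s},\nu_{t-s}^{b})))\,\nu_s(db)-\varphi_s(\mathcal{W}_p(\mu_s,\nu_s))$, which equals $\int S(t-s)f(x+b)\,\nu_s(db)-\varphi_s(\mathcal{W}_p(\mu_s,\nu_s))=S(s)S(t-s)f(x)$. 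As $S(t)f(x)$ dominates the objective at $\nu_t$, the claimed inequality follows; everything beyond the penalty-splitting step is bookkeeping together with the standard measurable selection.
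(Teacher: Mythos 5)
Your proposal is correct and takes essentially the same route as the paper's own proof: the same nested-optimizer construction of the coupling $\gamma_t$ as the law of $(a+c,\,b+e)$, the same use of the convolution identity $\mu_t=\mu_s\ast\mu_{t-s}$ and Minkowski's inequality to get $\mathcal{W}_p(\mu_t,\nu_t)\leq I+J$, and the identical penalty-splitting $\varphi_t(I+J)\leq\varphi_s(I)+\varphi_{t-s}(J)$ via the scaling and convexity of $\varphi$, followed by Jensen's inequality for the convex map $x\mapsto\varphi_{t-s}(x^{1/p})$. The only deviation is cosmetic (you index the inner optimizers by $x+b$ from the start, whereas the paper shifts to $\nu_{t-s}^{b+x}$ only in the final assembly), so nothing further is needed.
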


\begin{lemma}
	\label{lem:monotonicity}
	For all $f\in C_0(\mathbb{R}^d)$, $t\ge 0$ and $n\in\mathbb{N}$, we have that $\mathscr{S}^{n+1}(t) f \leq \mathscr{S}^{n}(t) f$. 
	Further, $\mathscr{S}^{n}(t)$ is a contraction on $C_0(\mathbb{R}^d)$, and $\mathscr{S}^{n}(t) f$ has the same modulus of continuity as $f$.
\end{lemma}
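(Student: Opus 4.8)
The plan is to establish the two claims of Lemma~\ref{lem:monotonicity} by leveraging the already-proven properties of the single-step operator $S(t)$ (Lemma~\ref{lem:S.maps.BUC.to.BUC}) together with the key super-additivity inequality of Lemma~\ref{lem:key}. The monotonicity claim $\mathscr{S}^{n+1}(t)f \leq \mathscr{S}^n(t)f$ is the heart of the statement; the contraction and modulus-of-continuity claims should follow quickly by induction from the corresponding single-step properties.

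First I would dispose of the easy parts. Recall that $\mathscr{S}^n(t)$ is by definition a finite composition $S(r_1)\circ\cdots\circ S(r_k)$ of single-step operators (the $S(t_{i+1}-t_i)$ along the dyadic grid $\mathbb{T}_n$, capped off by $S(t-t_n)$). Since each $S(r)$ maps $C_0(\mathbb{R}^d)$ into itself and is a contraction by Lemma~\ref{lem:S.maps.BUC.to.BUC}, and a composition of contractions is a contraction, it follows immediately that $\mathscr{S}^n(t)$ is a contraction on $C_0(\mathbb{R}^d)$. Likewise, since each $S(r)$ preserves the modulus of continuity of its argument, composing them preserves it as well, so $\mathscr{S}^n(t)f$ has the same modulus of continuity as $f$. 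These two points are a one-line induction each.

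For the monotonicity in $n$, the key observation is that the dyadic grid $\mathbb{T}_{n+1}$ refines $\mathbb{T}_n$: every grid point of level $n$ is also a grid point of level $n+1$, and each interval of $\mathbb{T}_n$ is split into two halves in $\mathbb{T}_{n+1}$. The plan is therefore to show that refining a single subinterval can only decrease the value of the composition. Concretely, on an interval of length $r$ that gets split into two pieces $s$ and $r-s$, Lemma~\ref{lem:key} gives $S(s)\,S(r-s)\,g \leq S(r)\,g$ for any $g\in C_0(\mathbb{R}^d)$. Since $\mathscr{S}^{n+1}(t)$ replaces the factor $S(r)$ appearing in $\mathscr{S}^n(t)$ by the finer product $S(s)\circ S(r-s)$, I would argue that applying Lemma~\ref{lem:key} inside the composition yields the pointwise inequality $\mathscr{S}^{n+1}(t)f \leq \mathscr{S}^n(t)f$. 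To make this rigorous I would set $g := \mathscr{S}^{\cdots}(\cdots)f$ to be the result of applying the common tail of the two compositions, then use monotonicity of the remaining $S(\cdot)$ factors (Lemma~\ref{lem:S.maps.BUC.to.BUC}) to propagate the inequality $S(s)S(r-s)g \leq S(r)g$ through the outer factors, exploiting that each $S(\cdot)$ preserves the order $\leq$.

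\emph{The main obstacle} is the bookkeeping of how $\mathbb{T}_{n+1}$ refines $\mathbb{T}_n$ together with the roles of the two ``cap'' terms $S(t-t_n)$ and $S(t-t_{n+1})$ — in particular, verifying that the interval containing $t$ is split consistently and that the final partial step matches up so that $\mathscr{S}^{n+1}(t)$ really is $\mathscr{S}^n(t)$ with one (or more) of its intervals bisected. A clean way to handle this is to prove a slightly more general monotonicity statement: that for any two finite grids $\mathbb{T}\subseteq\mathbb{T}'$ on $[0,t]$ (with $\mathbb{T}'$ a refinement of $\mathbb{T}$), the corresponding compositions satisfy the reverse ordering, by repeatedly applying Lemma~\ref{lem:key} one split at a time and using monotonicity of the $S(\cdot)$'s to push each local inequality through. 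The claim then follows by specializing to the dyadic refinement $\mathbb{T}_n\subseteq\mathbb{T}_{n+1}$ and handling the final partial interval $[t_n,t]$ or $[t_{n+1},t]$ as one more split.
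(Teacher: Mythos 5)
Your proposal is correct and takes essentially the same route as the paper, whose proof is precisely the one-line argument you flesh out: the pointwise inequality $\mathscr{S}^{n+1}(t)f\leq\mathscr{S}^{n}(t)f$ by applying Lemma \ref{lem:key} to each bisected interval (including the final partial step $[t_n,t]$) and propagating through the outer factors via monotonicity of $S(\cdot)$, while the contraction and modulus-of-continuity claims follow by induction from Lemma \ref{lem:S.maps.BUC.to.BUC}. The only minor point to note is the degenerate case $t\in\mathbb{T}_{n+1}$, where the split of the cap produces a factor $S(0)$; this is harmless because $\varphi_0$ forces $\nu=\mu_0=\delta_0$, so $S(0)$ is the identity.
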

\begin{proof}
	Both statements follow from Lemma \ref{lem:key} (respectively Lemma \ref{lem:S.maps.BUC.to.BUC}) together with an induction.
\end{proof}

\begin{corollary}\label{cor:cal.S.properties}
	Let $t \geq 0$ and $f, g \in C_0(\mathbb{R}^d)$ such that $f\leq g$.
	Then, the pointwise limit $\mathscr{S}(t) f := \lim_{n\rightarrow \infty} \mathscr{S}^n(t) f$ exists and is in fact uniform.
	Moreover, $\mathscr{S}(t)$ is a convex contraction on $C_0(\mathbb{R}^d)$ such that $\mathscr{S}(t) 0 = 0$ and $\mathscr{S}(t)f\leq \mathscr{S}(t)g$.
\end{corollary}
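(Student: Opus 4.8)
The plan is to push every property from the approximations $\mathscr{S}^n(t)$, for which Lemma~\ref{lem:monotonicity} records monotonicity in $n$, contractivity, and a common modulus of continuity, through to the limit; the only genuinely non-routine point is upgrading pointwise to \emph{uniform} convergence on the non-compact space $\mathbb{R}^d$. For $t=0$ the claim is trivial, since $\mu_0=\delta_0$ and the penalty $\varphi_0$ force $S(0)f=f$ and hence $\mathscr{S}^n(0)f=f$; so I focus on $t>0$. To get existence of the pointwise limit, I would note that $(\mathscr{S}^n(t)f)_n$ is decreasing by Lemma~\ref{lem:monotonicity}, while $S(t)0=0$ gives $\mathscr{S}^n(t)0=0$ and therefore, by contractivity, $\|\mathscr{S}^n(t)f\|_\infty\le\|f\|_\infty$; a bounded decreasing real sequence converges, so $\mathscr{S}(t)f(x):=\lim_n\mathscr{S}^n(t)f(x)$ is well defined for every $x$.

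Next I would show $\mathscr{S}(t)f\in C_0(\mathbb{R}^d)$. Uniform continuity is inherited: each $\mathscr{S}^n(t)f$ has the modulus of continuity $\omega$ of $f$, so passing to the limit gives $|\mathscr{S}(t)f(x)-\mathscr{S}(t)f(y)|\le\omega(|x-y|)$. For decay at infinity I would squeeze between two fixed elements of $C_0$: writing $P_tf(x):=\int_{\mathbb{R}^d}f(x+y)\,\mu_t(dy)$ for the linear L\'evy semigroup, the bound $S(s)g\ge P_sg$ (from $\varphi(0)=0$), monotonicity of $S$, and the semigroup identity for $P$ give $P_tf\le\mathscr{S}^n(t)f\le\mathscr{S}^1(t)f$. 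Since $P_tf$ and $\mathscr{S}^1(t)f$ both lie in $C_0(\mathbb{R}^d)$, the squeezed limit $\mathscr{S}(t)f$ does as well.

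The main obstacle is the uniform convergence, and here I would combine Dini's theorem with the same squeeze. Given $\varepsilon>0$, the function $\mathscr{S}^1(t)f-\mathscr{S}(t)f$ is nonnegative and lies in $C_0(\mathbb{R}^d)$, so there is $R>0$ with $0\le\mathscr{S}^n(t)f(x)-\mathscr{S}(t)f(x)\le\mathscr{S}^1(t)f(x)-\mathscr{S}(t)f(x)\le\varepsilon$ for all $|x|\ge R$ and all $n$. On the compact ball $\{|x|\le R\}$ the continuous functions $\mathscr{S}^n(t)f$ decrease pointwise to the continuous function $\mathscr{S}(t)f$, so Dini's theorem yields $N$ with $\sup_{|x|\le R}|\mathscr{S}^n(t)f-\mathscr{S}(t)f|\le\varepsilon$ for $n\ge N$. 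Together these two estimates bound $\|\mathscr{S}^n(t)f-\mathscr{S}(t)f\|_\infty$ by $\varepsilon$ for $n\ge N$, giving the asserted uniform convergence.

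Finally, the algebraic structure transfers to the limit. Each $\mathscr{S}^n(t)$ is a composition of the convex and increasing maps $S(\cdot)$, hence itself convex and increasing, so the pointwise inequalities defining convexity and monotonicity ($f\le g\Rightarrow\mathscr{S}(t)f\le\mathscr{S}(t)g$) survive the limit; contractivity follows by passing $|\mathscr{S}^n(t)f(x)-\mathscr{S}^n(t)g(x)|\le\|f-g\|_\infty$ to the limit in $x$ and then taking the supremum over $x$; and $\mathscr{S}(t)0=\lim_n\mathscr{S}^n(t)0=0$.
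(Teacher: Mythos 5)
Your proof is correct, and it follows the paper's overall skeleton (monotone pointwise limit via Lemma \ref{lem:monotonicity}, a squeeze between the linear L\'evy semigroup and a fixed $C_0$ upper envelope to place the limit in $C_0(\mathbb{R}^d)$, then algebraic properties passed through the limit), but it diverges at the one non-routine step, the upgrade to uniform convergence. The paper invokes the Arzel\`a--Ascoli theorem, exploiting the fact from Lemma \ref{lem:monotonicity} that all $\mathscr{S}^n(t)f$ share the modulus of continuity of $f$, combined with the limit vanishing at infinity; you instead exploit the \emph{monotonicity} in $n$ from the same lemma and apply Dini's theorem on a large ball, handling the tails uniformly in $n$ via the squeeze $0\le \mathscr{S}^n(t)f-\mathscr{S}(t)f\le \mathscr{S}^1(t)f-\mathscr{S}(t)f$ with the right-hand side in $C_0(\mathbb{R}^d)$. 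Your route is arguably cleaner at this point: the paper's appeal to Arzel\`a--Ascoli is terse about how the tail is controlled uniformly in $n$ (implicitly it is again the squeeze $S^\mu(t)f\le \mathscr{S}^n(t)f\le S(t)f$), whereas your two-step tail-plus-Dini argument makes this explicit, and in fact amounts to Dini's theorem on the one-point compactification of $\mathbb{R}^d$. Two cosmetic differences: you use $\mathscr{S}^1(t)f$ as the upper envelope where the paper uses $S(t)f$ (equivalent, since Lemma \ref{lem:monotonicity} rests on Lemma \ref{lem:key}), and your separate treatment of $t=0$, while correct ($\mu_0=\delta_0$ and $\varphi_0$ force $S(0)f=f$), is not needed, as the general argument covers it. Note also that your route still needs the common modulus of continuity, since Dini requires the limit to be continuous, so both ingredients of Lemma \ref{lem:monotonicity} are used in either proof, just in different roles.
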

\begin{proof}
	We will use the semigroup $(S^\mu(t))_{t\ge 0}$ corresponding to our initial L\'evy process, given by $S^\mu(t)f(x) := \int f(x+y) \,\mu_t(dy)$ and often use that it is a Feller semigroup (see, e.g., \cite[Theorem 3.1.9]{applebaum2009levy}).
	
	By Lemma \ref{lem:monotonicity}, the sequence $(\mathscr{S}^n(t) f)_{n\in\mathbb{N}}$ is decreasing, hence the limit  $\mathscr{S}(t) f = \lim_{n\to\infty} \mathscr{S}^n(t) f$ exists pointwise. Also, the limit $\mathscr{S}(t)f$ is vanishing at infinity. Indeed, from the semigroup property of $(S^\mu(t))_{t\ge 0}$, it follows that
	$S^\mu(t)f\le \mathscr{S}^n(t)f\le S(t)f$ for all $n\in\mathbb{N}$, and therefore $S^\mu(t)f\le \mathscr{S}(t)f\le S(t)f$.  Since by Lemma \ref{lem:S.maps.BUC.to.BUC}, $S(t)f$ is vanishing at infinity, and  $(S^\mu(t))_{t\ge 0}$ is a Feller semigroup, we conclude that  $\mathscr{S}(t)f$ is vanishing at infinity.
	Further, by Lemma \ref{lem:monotonicity}, the sequence $\mathscr{S}^n(t) f$ is uniformly equicontinuous on every compact subset of $\mathbb{R}^d$, which by the Arzel\`a-Ascoli theorem and the fact that $\mathscr{S}(t)f$ is vanishing at infinity, implies that $\lim_{n\to\infty}\|\mathscr{S}^n(t) f - \mathscr{S}(t) f\|_\infty=0$.
	
	Finally, by induction over $n\in\mathbb{N}$, it follows from Lemma \ref{lem:S.maps.BUC.to.BUC} that $\mathscr{S}^n(t)$ is a convex contraction on $C_0(\mathbb{R}^d)$, which satisfies $\mathscr{S}^n(t)0=0$ and $\mathscr{S}^n(t)f\le \mathscr{S}^n(t)g$. These properties remain true for the limit $\mathscr{S}(t)$. The proof is complete.
\end{proof}

Denote by $X_t \sim \mu_t$ the initial L\'evy process.
In the following, we shall often use that $t\mapsto \mu_t$ is continuous w.r.t.\ $\mathcal{W}_p$ (at $t=0$). 
To see that this is true, use the assumption $E[|X_1|^p]<\infty$ and \cite[Theorem 25.18]{sato1999levy} to obtain $E[\sup_{t\in[0,1]} |X_t|^p]<\infty$.
As $X$ has c\`adl\`ag paths, dominated convergence implies that $\mathcal{W}_p(\mu_t,\delta_0)^p=\int_{\mathbb{R}^d} |x|^p\,\mu_t(dx)=E[|X_t|^p]\to 0$ as $t\downarrow0$.

The next result states the strong continuity of the family $(\mathscr{S}(t))_{t\ge 0}$ at zero.
\begin{lemma}
	\label{lem:uniform.at.zero}
	For every $f\in C_0(\mathbb{R}^d)$, we have that
	\[
	\lim_{t\downarrow 0}\|\mathscr{S}(t) f - f\|_\infty = 0.
	\]
	
	\begin{proof}
		Let $f\in C_0(\mathbb{R}^d)$ and $\varepsilon > 0$.
		
		We first show an upper bound, namely that there is $t_0 > 0$ such that $\mathscr{S}(t) f \leq f + 2\varepsilon$ for all $t < t_0$.
		As functions in $C_0(\mathbb{R}^d)$ are uniformly continuous, there is $\delta > 0$ such that $|f(x+y) - f(x)| \leq \varepsilon$ for all $x, y \in \mathbb{R}^d$ with $|y| \leq \delta$. Then,
		\begin{align}
		\label{eq:S(t)n.converges.uniform}
		\begin{split}
		\mathscr{S}(t) f(x)
		&\leq
		S(t) f(x)
		= \sup_{\nu \in \Delta_{f,t}} \Big(\int_{\mathbb{R}^d} f(x+y) \nu(dy) - \varphi_t(\mathcal{W}_p(\mu_t, \nu)) \Big)\\
		&\leq \sup_{\nu\in\Delta_{f,t}} \Big( \int_{\mathbb{R}^d} f(x+y)1_{\{|y|\leq\delta\}} + f(x+y)1_{\{|y|>\delta\}}\,\nu(dy)\Big) \\
		&\leq f(x)+\varepsilon  + \|f\|_\infty \sup_{\nu\in\Delta_{f,t}}  \nu(\{ y\in\mathbb{R}^d : |y|>\delta\}).
		\end{split}
		\end{align}
		By Lemma \ref{lem:helpphi}, it holds that \[
		\lim_{t\downarrow 0}\sup_{\nu \in \Delta_{f,t}} \mathcal{W}_p(\mu_t, \nu)
		\leq \lim_{t\downarrow 0} \sup\big\{r\geq 0 : \varphi_t(r) \leq \|f\|_\infty + 1 \big\}
		= 0.
		\]
		As argued before this lemma, we have $\lim_{t\downarrow 0} \int_{\mathbb{R}^d} |y|^p \,\mu_t(dy)= 0$ and thus it follows from Lemma \ref{lem:markov} that
		\[\lim_{t\downarrow 0}\sup_{\nu \in \Delta_{f,t}}\nu(\{ y\in \mathbb{R}^d : |y| > \delta \}) = 0,\]
		which yields the upper bound.
		
		As for the lower bound, similarly as in the proof of Corollary \ref{cor:cal.S.properties}, we make use of the fact that $S^\mu\le \mathscr{S}$. Since $(S^\mu(t))_{t\ge 0}$ is a Feller semigroup, it holds
		$ f - \varepsilon\le \mathscr{S}(t) f $ for all $0 \leq t < t_0$ for a suitable $t_0>0$.
		This completes the proof.
	\end{proof}
\end{lemma}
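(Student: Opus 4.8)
The plan is to establish strong continuity at zero by a squeeze argument built on the order sandwich $S^\mu(t)f \le \mathscr{S}(t)f \le S(t)f$ that was already derived in the proof of Corollary \ref{cor:cal.S.properties}, where $(S^\mu(t))_{t\ge 0}$ denotes the Feller semigroup $S^\mu(t)f(x)=\int_{\mathbb{R}^d} f(x+y)\,\mu_t(dy)$ of the underlying L\'evy process. Since $S^\mu$ is strongly continuous at zero, the lower estimate $\mathscr{S}(t)f \ge S^\mu(t)f \ge f-\varepsilon$ for all sufficiently small $t$ is essentially free, so the entire difficulty will be concentrated in the upper bound $\mathscr{S}(t)f \le f + c(\varepsilon)$ with $c(\varepsilon)\downarrow 0$. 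Because $\mathscr{S}(t)f \le S(t)f$, it suffices to bound the single operator $S(t)$.

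For the upper bound on $S(t)f$ I would first discard the nonnegative penalization term and restrict the supremum to the tight set $\Delta_{f,t}$, obtaining $S(t)f(x) \le \sup_{\nu\in\Delta_{f,t}} \int_{\mathbb{R}^d} f(x+y)\,\nu(dy)$. Fixing $\delta>0$ with $|f(x+y)-f(x)|\le\varepsilon$ whenever $|y|\le\delta$ (uniform continuity of $f\in C_0(\mathbb{R}^d)$), I would split each integral at radius $\delta$ and bound the outer part by $\|f\|_\infty\,\nu(\{|y|>\delta\})$, which yields $S(t)f(x) \le f(x) + \varepsilon + \|f\|_\infty \sup_{\nu\in\Delta_{f,t}}\nu(\{|y|>\delta\})$ uniformly in $x$.

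The crux is then to show that the tail mass vanishes uniformly over $\Delta_{f,t}$ as $t\downarrow 0$. Here the three preparatory lemmas combine cleanly: by Lemma \ref{lem:helpphi}, the defining constraint $\varphi_t(\mathcal{W}_p(\mu_t,\nu))\le \|f\|_\infty+1$ of $\Delta_{f,t}$ forces $\sup_{\nu\in\Delta_{f,t}}\mathcal{W}_p(\mu_t,\nu)\to 0$; combined with the $\mathcal{W}_p$-continuity $\int_{\mathbb{R}^d}|y|^p\,\mu_t(dy)\to 0$ of $t\mapsto\mu_t$ at zero, Lemma \ref{lem:markov} then delivers $\sup_{\nu\in\Delta_{f,t}}\nu(\{|y|>\delta\})\to 0$. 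Feeding this back into the previous display closes the upper estimate, and together with the lower bound the squeeze gives $\lim_{t\downarrow 0}\|\mathscr{S}(t)f-f\|_\infty=0$.

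I expect the main obstacle to be precisely this uniform tail control, as it is the only step that genuinely uses the structure of $\Delta_{f,t}$ and the interplay between the superlinear growth of the penalty (via $p>1$ in Lemma \ref{lem:helpphi}) and the $\mathcal{W}_p$-continuity of the baseline marginals. The remaining ingredients---the order sandwich and the lower bound from the Feller property---are bookkeeping once this estimate is secured.
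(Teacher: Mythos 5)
Your proposal is correct and follows essentially the same route as the paper's own proof: the sandwich $S^\mu(t)f \le \mathscr{S}(t)f \le S(t)f$, the split of the integral at radius $\delta$ via uniform continuity of $f$, and the uniform tail control over $\Delta_{f,t}$ obtained by combining Lemma \ref{lem:helpphi} with the $\mathcal{W}_p$-continuity of $t\mapsto\mu_t$ at zero and Lemma \ref{lem:markov}, with the lower bound supplied by the Feller property of $S^\mu$. You have also correctly identified the uniform tail estimate as the only substantive step, exactly as in the paper.
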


For later reference, let us point out that the same proof as given for Lemma \ref{lem:uniform.at.zero} yields the following result.

\begin{corollary}
	\label{cor:uniform.at.zero.ddyadic}
	We have that $ \lim_{n\to\infty}\|\mathscr{S}^n(t_n) f - f\|_\infty = 0$
	for all $f\in C_0(\mathbb{R}^d)$ and all sequences $(t_n)_{n\in\mathbb{N}}$ in $[0, \infty)$ with $\lim_{n\to\infty} t_n=0$.
\end{corollary}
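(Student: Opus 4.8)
The plan is to mimic the proof of Lemma \ref{lem:uniform.at.zero} essentially verbatim, exploiting the sandwich bound
\[ S^\mu(s)f \le \mathscr{S}^n(s)f \le S(s)f \]
which holds for every $s\ge 0$ and every $n\in\mathbb{N}$. This was recorded in the proof of Corollary \ref{cor:cal.S.properties}: the upper bound follows from Lemma \ref{lem:key} (iterating $S(a)S(b)f\le S(a+b)f$) together with monotonicity of $S$, while the lower bound follows from $\varphi(0)=0$ (which gives $S(a)f\ge S^\mu(a)f$ by testing with $\nu=\mu_a$) together with the semigroup property of $S^\mu$. Specializing to $s=t_n$, the claim will follow once I control the two outer terms as $t_n\downarrow 0$.

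For the upper bound I would reuse the estimate \eqref{eq:S(t)n.converges.uniform}: fixing $\varepsilon>0$ and $\delta>0$ with $|f(x+y)-f(x)|\le\varepsilon$ whenever $|y|\le\delta$, one has
\[ S(t)f \le f + \varepsilon + \|f\|_\infty \sup_{\nu\in\Delta_{f,t}} \nu(\{y\in\mathbb{R}^d : |y|>\delta\}). \]
By Lemma \ref{lem:helpphi} the Wasserstein radius $\sup_{\nu\in\Delta_{f,t}}\mathcal{W}_p(\mu_t,\nu)$ tends to zero as $t\downarrow 0$, and since $\int_{\mathbb{R}^d}|y|^p\,\mu_t(dy)\to 0$, Lemma \ref{lem:markov} forces $\sup_{\nu\in\Delta_{f,t}}\nu(\{|y|>\delta\})\to 0$. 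Hence there is $t_0>0$ with $S(t)f\le f+2\varepsilon$ for all $t<t_0$; since $t_n\to 0$, eventually $\mathscr{S}^n(t_n)f\le S(t_n)f\le f+2\varepsilon$. For the lower bound I would invoke strong continuity of the Feller semigroup $(S^\mu(t))_{t\ge 0}$, namely $\|S^\mu(t)f-f\|_\infty\to 0$ as $t\downarrow 0$, so that $\mathscr{S}^n(t_n)f\ge S^\mu(t_n)f\ge f-\varepsilon$ for $n$ large. Combining the two bounds yields $\|\mathscr{S}^n(t_n)f-f\|_\infty\le 2\varepsilon$ eventually, and letting $\varepsilon\downarrow 0$ completes the argument.

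The single point that requires care — and the only place where this statement differs from Lemma \ref{lem:uniform.at.zero} — is that here the time argument $t_n$ and the index $n$ vary \emph{simultaneously}. I would emphasize that both the upper estimate via \eqref{eq:S(t)n.converges.uniform} and the Feller lower estimate depend only on the time $t_n$ and are uniform in $n$, precisely because the sandwich $S^\mu(s)f\le\mathscr{S}^n(s)f\le S(s)f$ holds for all $n$ at once. Consequently no interchange of limits is needed: the convergence $t_n\to 0$ alone drives both outer terms to $f$, irrespective of how $n$ grows, which is exactly why the proof of Lemma \ref{lem:uniform.at.zero} carries over unchanged.
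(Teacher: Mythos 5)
Your proposal is correct and coincides with the paper's argument: the paper simply remarks that the proof of Lemma \ref{lem:uniform.at.zero} carries over verbatim, which is exactly what you do via the sandwich $S^\mu(t_n)f\le\mathscr{S}^n(t_n)f\le S(t_n)f$, with the upper bound from the estimate \eqref{eq:S(t)n.converges.uniform} (via Lemmas \ref{lem:helpphi} and \ref{lem:markov}) and the lower bound from strong continuity of the Feller semigroup. Your explicit observation that both outer bounds are uniform in $n$, so that only $t_n\downarrow 0$ matters, is precisely the reason the paper's ``same proof'' claim is valid.
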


\begin{lemma}
	\label{lem:Snfn.to.Sf}
	Let $t,t_n\geq 0$ with $t_n \in \mathbb{T}_n$, $t_n\le t$, and $g,g_n\in C_0(\mathbb{R}^d)$ for all $n\in\mathbb{N}$. If $\lim_{n\to\infty} t_n = t$ and $\lim_{n\to \infty} \|g_n-g\|_\infty=0$, then
	%such that $t_n$ are dyadic numbers which converge from below to $t$ and $g_n$ converge uniformly to $g$ as $n\to\infty$.
	%Then,
	\[ \lim_{n\to\infty}\| \mathscr{S}^n(t_n) g_n- \mathscr{S}(t)g\|_\infty\to 0.\]
	%as $n\to\infty$.
\end{lemma}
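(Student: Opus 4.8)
The plan is to split the error by the triangle inequality into three pieces and control each by the contraction property already established, a dyadic bookkeeping identity for $\mathscr{S}^n$, and the two continuity results (Corollaries \ref{cor:cal.S.properties} and \ref{cor:uniform.at.zero.ddyadic}). Concretely, I would estimate
\[
\|\mathscr{S}^n(t_n) g_n - \mathscr{S}(t) g\|_\infty
\le \|\mathscr{S}^n(t_n) g_n - \mathscr{S}^n(t_n) g\|_\infty
+ \|\mathscr{S}^n(t_n) g - \mathscr{S}^n(t) g\|_\infty
+ \|\mathscr{S}^n(t) g - \mathscr{S}(t) g\|_\infty.
\]
The first term is at most $\|g_n - g\|_\infty$ because $\mathscr{S}^n(t_n)$ is a contraction (Lemma \ref{lem:monotonicity}), hence it vanishes by hypothesis. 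The third term vanishes for the \emph{fixed} time $t$ by the uniform convergence statement of Corollary \ref{cor:cal.S.properties}.

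The crux is the middle term. Since $t_n\in\mathbb{T}_n$ is dyadic and $t_n\le t$, let $t_n'\in\mathbb{T}_n$ be the largest dyadic number with $t_n'\le t$, so that $t_n\le t_n'\le t$. I claim the factorization
\[
\mathscr{S}^n(t) = \mathscr{S}^n(t_n)\circ\mathscr{S}^n(t-t_n).
\]
Indeed, $\mathscr{S}^n(t_n)=\mathscr{S}^{\mathbb{T}_n}(t_n)$ is a composition of $2^n t_n$ copies of $S(2^{-n})$, while $\mathscr{S}^n(t)=\mathscr{S}^{\mathbb{T}_n}(t_n')\circ S(t-t_n')$; by associativity of composition the latter equals $\mathscr{S}^{\mathbb{T}_n}(t_n)\circ\big(\mathscr{S}^{\mathbb{T}_n}(t_n'-t_n)\circ S(t-t_n')\big)$, and the parenthesized operator is exactly $\mathscr{S}^n(t-t_n)$, since a quick check with $\lfloor(t-t_n)2^n\rfloor=\lfloor t2^n\rfloor - t_n2^n$ shows that $t_n'-t_n$ is the largest dyadic number below $t-t_n$. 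Using this factorization and contractivity of $\mathscr{S}^n(t_n)$ once more,
\[
\|\mathscr{S}^n(t_n) g - \mathscr{S}^n(t) g\|_\infty
= \big\|\mathscr{S}^n(t_n) g - \mathscr{S}^n(t_n)\big[\mathscr{S}^n(t-t_n) g\big]\big\|_\infty
\le \big\|g - \mathscr{S}^n(t - t_n) g\big\|_\infty.
\]
Because $t-t_n\downarrow 0$, the right-hand side tends to $0$ by Corollary \ref{cor:uniform.at.zero.ddyadic}, which is precisely tailored to sequences of times tending to zero.

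Combining the three bounds yields $\|\mathscr{S}^n(t_n) g_n - \mathscr{S}(t) g\|_\infty\to 0$. I expect the only real obstacle to be the notational/combinatorial verification of the factorization $\mathscr{S}^n(t) = \mathscr{S}^n(t_n)\circ\mathscr{S}^n(t-t_n)$: one must track which grid points the operators $\mathscr{S}^{\mathbb{T}_n}$ run over and confirm that cutting the composition at the \emph{dyadic} time $t_n$ leaves a tail coinciding with $\mathscr{S}^n$ applied to the residual time $t-t_n$ (this uses that $t_n$ is a grid point, so no non-dyadic remainder interferes with the cut). Once this identity is in place, everything else is immediate from contractivity and the two continuity corollaries, and no genuinely new estimate is needed.
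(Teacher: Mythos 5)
Your proof is correct and follows essentially the same route as the paper's: the identical three-term triangle-inequality decomposition, the same factorization $\mathscr{S}^n(t)=\mathscr{S}^n(t_n)\circ\mathscr{S}^n(t-t_n)$ (which the paper asserts holds ``by definition''), contractivity of $\mathscr{S}^n(t_n)$ from Lemma \ref{lem:monotonicity}, and Corollaries \ref{cor:cal.S.properties} and \ref{cor:uniform.at.zero.ddyadic} for the remaining two limits. Your explicit check that $t_n'-t_n$ is the largest element of $\mathbb{T}_n$ below $t-t_n$ --- valid precisely because $t_n$ is itself a grid point of $\mathbb{T}_n$ --- correctly fills in the one detail the paper leaves implicit.
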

\begin{proof}
	As $  \mathscr{S}^n(t)= \mathscr{S}^n(t_n)\mathscr{S}^n(t-t_n)$ by definition, the triangle inequality implies that
	\begin{align*}
	&\| \mathscr{S}(t)g - \mathscr{S}^n(t_n)g_n\|_\infty
	\leq \| \mathscr{S}(t)g - \mathscr{S}^n(t)g\|_\infty\\
	&\quad  +   \| \mathscr{S}^n(t_n)\mathscr{S}^n(t-t_n)g - \mathscr{S}^n(t_n)g\|_\infty + \| \mathscr{S}^n(t_n)g - \mathscr{S}^n(t_n)g_n\|_\infty.
	\end{align*}
	By Corollary \ref{cor:cal.S.properties} the first term converges to zero as $n\to\infty$. As for the middle term, by Lemma \ref{lem:monotonicity} we have that $\mathscr{S}^n(t_n)$ is a contraction, so that
	\[ \| \mathscr{S}^n(t_n)\mathscr{S}^n(t-t_n) g - \mathscr{S}^n(t_n)g\|_\infty
	\leq \|\mathscr{S}^n(t-t_n) g - g \|_\infty . \]
	The latter converges to zero by Corollary \ref{cor:uniform.at.zero.ddyadic}. Again by Lemma \ref{lem:monotonicity}, the last term converges to zero as $n\to\infty$.
	This completes the proof.
\end{proof}

Now, we are ready to state our first main result (the convex generalization of Proposition \ref{prop:semigroup.intro}).

\begin{proposition}
	\label{prop:semigroup.convex}
	The family $(\mathscr{S}(t))_{t\geq 0}$ is a strongly continuous, convex, monotone and normalized contraction semigroup on $C_0(\mathbb{R}^d)$, i.e., for every $s,t\ge 0$ and $f\in C_0(\mathbb{R}^d)$, we have that
	\begin{enumerate}[(i)]
		\item $\mathscr{S}(t)\colon C_0(\mathbb{R}^d)\to C_0(\mathbb{R}^d)$ is a convex and monotone contraction such that $\mathscr{S}(t)0=0$,
		\item $\mathscr{S}(0)f=f$,
		\item $\mathscr{S}(t)\circ\mathscr{S}(s)=\mathscr{S}(t+s)$,
		\item $\lim_{t \downarrow 0} \|\mathscr{S}(t)f-f\|_\infty=0$.
	\end{enumerate}
\end{proposition}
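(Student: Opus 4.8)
The plan is to dispatch (i), (ii), and (iv) by quoting results already in hand, and to devote the real work to the semigroup property (iii). Indeed, (i) is precisely the content of Corollary \ref{cor:cal.S.properties}, which shows that $\mathscr{S}(t)$ maps $C_0(\mathbb{R}^d)$ into itself and is a monotone, convex contraction with $\mathscr{S}(t)0=0$. Property (iv) is exactly Lemma \ref{lem:uniform.at.zero}. For (ii), observe that $0\in\mathbb{T}_n$ for every $n$ and that $S(0)=\mathrm{id}$, since $\varphi_0$ forces $\nu=\mu_0=\delta_0$ in the definition \eqref{def:S}; consequently $\mathscr{S}^n(0)=\mathrm{id}$ for all $n$, and passing to the limit gives $\mathscr{S}(0)f=f$.

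For (iii), fix $s,t\ge 0$ and $f\in C_0(\mathbb{R}^d)$, and approximate from below by dyadics: set $s_n:=\lfloor 2^n s\rfloor 2^{-n}$ and $t_n:=\lfloor 2^n t\rfloor 2^{-n}$, so that $s_n,t_n\in\mathbb{T}_n$, $s_n\le s$, $t_n\le t$, $s_n\to s$, and $t_n\to t$. The key point is that at the discrete level the semigroup property holds exactly: since $\mathbb{T}_n$ is the uniform grid of mesh $2^{-n}$ and $S(0)=\mathrm{id}$, the definition \eqref{eq:def.cal.S.intro} gives $\mathscr{S}^n(r)=S(2^{-n})^{\circ(2^n r)}$ for every $r\in\mathbb{T}_n$. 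As $s_n+t_n\in\mathbb{T}_n$ and all factors coincide with $S(2^{-n})$, it follows that $\mathscr{S}^n(t_n)\circ\mathscr{S}^n(s_n)=\mathscr{S}^n(s_n+t_n)$.

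It remains to pass to the limit, which I would do with three applications of Lemma \ref{lem:Snfn.to.Sf}. First, applying it with the constant sequence $f$ along $(s_n)$ yields $g_n:=\mathscr{S}^n(s_n)f\to\mathscr{S}(s)f=:g$ uniformly. Second, applying it along $(t_n)$ to the sequence $g_n\to g$ yields $\mathscr{S}^n(t_n)g_n\to\mathscr{S}(t)g=\mathscr{S}(t)\mathscr{S}(s)f$ uniformly. Third, since $s_n+t_n\in\mathbb{T}_n$, $s_n+t_n\le s+t$, and $s_n+t_n\to s+t$, applying it with the constant sequence $f$ gives $\mathscr{S}^n(s_n+t_n)f\to\mathscr{S}(s+t)f$ uniformly. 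Combining these with the discrete identity $\mathscr{S}^n(t_n)\circ\mathscr{S}^n(s_n)=\mathscr{S}^n(s_n+t_n)$ and uniqueness of limits gives $\mathscr{S}(t)\mathscr{S}(s)f=\mathscr{S}(s+t)f$, which is (iii).

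The main obstacle is concentrated entirely in making the limit passage legitimate, and this is exactly what Lemma \ref{lem:Snfn.to.Sf} is built to handle; the only genuine subtlety is to respect its hypothesis $t_n\le t$, which is why I approximate each of $s$ and $t$ from below rather than to the nearest dyadic. A minor point to check is that the exponent $2^n r$ in $S(2^{-n})^{\circ(2^n r)}$ is a nonnegative integer, so that the iterated composition makes sense; this holds by construction, as $s_n$, $t_n$, and $s_n+t_n$ are integer multiples of $2^{-n}$.
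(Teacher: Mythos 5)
Your proof is correct and follows essentially the same route as the paper: (i) and (iv) are dispatched via Corollary \ref{cor:cal.S.properties} and Lemma \ref{lem:uniform.at.zero}, and (iii) is obtained exactly as in the paper by approximating $s$ and $t$ from below by dyadics $s_n,t_n\in\mathbb{T}_n$, using the exact discrete identity $\mathscr{S}^n(t_n)\circ\mathscr{S}^n(s_n)=\mathscr{S}^n(s_n+t_n)$, and passing to the limit through repeated applications of Lemma \ref{lem:Snfn.to.Sf}. Your explicit verifications of (ii) (via $S(0)=\mathrm{id}$) and of the hypothesis $t_n\le t$ in Lemma \ref{lem:Snfn.to.Sf} are details the paper leaves implicit, but they change nothing structurally.
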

\begin{proof}
	In view of Corollary \ref{cor:cal.S.properties} and Lemma \ref{lem:uniform.at.zero}, it remains to prove the semigroup property $\mathscr{S}(t) \circ \mathscr{S}(s) = \mathscr{S}(t+s)$.
	To that end, fix some $f\in C_0(\mathbb{R}^d)$ and $s,t\geq 0$ and set denote by $s_n,t_n\in \mathbb{T}_n$ the closest dyadic elements prior to $s$ and $t$, respectively.
	By Lemma \ref{lem:Snfn.to.Sf} (applied with $g=g_n=f)$ we have
	\[ \mathscr{S}(t+s)f
	=\lim_{n\to\infty} \mathscr{S}^n(t_n+s_n)f=\lim_{n\to\infty} \mathscr{S}^n(t_n)\circ \mathscr{S}^n(s_n)f, \]
	where the last equality follows by definition of $\mathscr{S}^n$. Further, Lemma \ref{lem:Snfn.to.Sf} also implies that $\mathscr{S}^n(s_n)f$ converges uniformly to $\mathscr{S}(s)f$.
	Therefore, we may apply Lemma \ref{lem:Snfn.to.Sf} again (with 
	$g=\mathscr{S}(s) f$, and $g_n=\mathscr{S}^n(s_n)f$) and obtain
	\[
	\lim_{n\to\infty} \mathscr{S}^n(t_n)\circ\mathscr{S}^n(s_n)f= \mathscr{S}(t)\circ\mathscr{S}(s)f.\]
	This completes the proof.
\end{proof}

\begin{proposition}\label{prop:generator}
	For every $f\in D(A^\mu)\cap C_0^1(\mathbb{R}^d)$, we have that
	\[ \mathscr{A}f
	:=\lim_{t\downarrow 0} \frac{\mathscr{S}(t) f-f}{t}
	= A^\mu f + \varphi^\ast(|\nabla f|)\]
	and the limit is uniform.
\end{proposition}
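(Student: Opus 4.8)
The plan is to trap $\mathscr{S}$ between two families whose generators can be read off directly: from above by the one-step operator $S$, and from below by the Feller semigroups of drift-perturbed copies of the initial L\'evy process. Recall from the proof of Corollary \ref{cor:cal.S.properties} that $S^\mu(t)f\le\mathscr{S}(t)f\le S(t)f$, so the upper estimate will reduce to computing $\lim_{t\downarrow0}(S(t)f-f)/t$. For the lower estimate I introduce, for each fixed $\gamma\in\mathbb{R}^d$, the operator $S^{\mu,\gamma}(t)f(x):=\int_{\mathbb{R}^d}f(x+t\gamma+y)\,\mu_t(dy)$, which is the Feller semigroup of the L\'evy process with additional constant drift $\gamma$ and whose action on $f\in D(A^\mu)\cap C_0^1(\mathbb{R}^d)$ is, by a direct Taylor expansion together with $\mu_t\to\delta_0$ in $\mathcal{W}_p$, uniformly given by $(S^{\mu,\gamma}(t)f-f)/t\to A^\mu f+\gamma\cdot\nabla f$.

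\emph{Upper bound (the main obstacle).} Fix $f\in D(A^\mu)\cap C_0^1(\mathbb{R}^d)$ and write $\omega$ for the modulus of continuity of $\nabla f$. For a competitor $\nu$ in \eqref{def:S} with optimal coupling $\gamma\in\mathrm{Cpl}(\mu_t,\nu)$, a first-order Taylor expansion gives
\[ \int_{\mathbb{R}^d} f(x+y)\,\nu(dy) - \int_{\mathbb{R}^d} f(x+y')\,\mu_t(dy') = \int\big(f(x+y)-f(x+y')\big)\,\gamma(dy',dy) \le |\nabla f(x)|\,\mathcal{W}_p(\mu_t,\nu) + R_t(x,\nu). \]
After isolating the leading term $\nabla f(x)\cdot\int(y-y')\,\gamma$, the remainder collects the contributions of $(\nabla f(x+y')-\nabla f(x))\cdot(y-y')$ and the second-order Taylor error; H\"older's inequality (using $p>1$) bounds it by $R_t(x,\nu)\le\varepsilon_t\,\mathcal{W}_p(\mu_t,\nu)$ for some $\varepsilon_t\to0$ that is uniform in $x$ and over $\nu\in\Delta_{f,t}$. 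Here one crucially uses that $\mu_t\to\delta_0$ in $\mathcal{W}_p$, that $\sup_{\nu\in\Delta_{f,t}}\mathcal{W}_p(\mu_t,\nu)\to0$ by Lemma \ref{lem:helpphi}, and the uniform continuity of $\nabla f$. Since the integrand of $S(t)f-S^\mu(t)f$ is then controlled by $(|\nabla f(x)|+\varepsilon_t)\,r-\varphi_t(r)$ with $r=\mathcal{W}_p(\mu_t,\nu)$, and since $\sup_{r\ge0}(cr-\varphi_t(r))=t\varphi^\ast(c)$ by the scaling $\varphi_t=t\varphi(\cdot/t)$, I obtain $S(t)f(x)\le S^\mu(t)f(x)+t\,\varphi^\ast\big(|\nabla f(x)|+\varepsilon_t\big)$. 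Dividing by $t$, letting $t\downarrow0$, and using $(S^\mu(t)f-f)/t\to A^\mu f$ uniformly together with continuity of $\varphi^\ast$ on the bounded range of $|\nabla f|$ yields $\limsup_{t\downarrow0}(\mathscr{S}(t)f-f)/t\le A^\mu f+\varphi^\ast(|\nabla f|)$ uniformly.

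\emph{Lower bound.} For fixed $\gamma$, taking in \eqref{def:S} the competitor equal to $\mu_h$ translated by $h\gamma$ (so that $\mathcal{W}_p(\mu_h,\nu)\le h|\gamma|$ and $\varphi_h(h|\gamma|)=h\varphi(|\gamma|)$) gives the one-step bound $S(h)g\ge S^{\mu,\gamma}(h)g-h\varphi(|\gamma|)$ for all $h\ge0$ and $g\in C_0(\mathbb{R}^d)$. Because $S(h)$ is monotone and invariant under adding constants, while $S^{\mu,\gamma}$ satisfies the semigroup property, an induction over the finitely many steps of the grid $\mathbb{T}_n\cap[0,t]$ telescopes this into $\mathscr{S}^n(t)f\ge S^{\mu,\gamma}(t)f-t\varphi(|\gamma|)$; passing to the infimum over $n$ gives $\mathscr{S}(t)f\ge S^{\mu,\gamma}(t)f-t\varphi(|\gamma|)$. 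Dividing by $t$ and using the generator of $S^{\mu,\gamma}$ computed above yields, for every $\gamma\in\mathbb{R}^d$,
\[ \liminf_{t\downarrow0}\frac{\mathscr{S}(t)f-f}{t}\ge A^\mu f+\gamma\cdot\nabla f-\varphi(|\gamma|)\quad\text{uniformly.} \]

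\emph{Conclusion.} Optimizing the last display over $\gamma$ pointwise gives the target lower bound $A^\mu f(x)+\sup_{\gamma}\big(\gamma\cdot\nabla f(x)-\varphi(|\gamma|)\big)=A^\mu f(x)+\varphi^\ast(|\nabla f(x)|)$. To upgrade this to a uniform bound, fix $\varepsilon>0$ and choose finitely many $\gamma_1,\dots,\gamma_k$ with $\max_j\big(\gamma_j\cdot\xi-\varphi(|\gamma_j|)\big)\ge\varphi^\ast(|\xi|)-\varepsilon$ for all $\xi$ in the (compact) range of $\nabla f$; since $\mathscr{S}(t)f\ge\max_j\big(S^{\mu,\gamma_j}(t)f-t\varphi(|\gamma_j|)\big)$ and a finite maximum of uniformly convergent families converges uniformly, one gets $\liminf_{t\downarrow0}(\mathscr{S}(t)f-f)/t\ge A^\mu f+\varphi^\ast(|\nabla f|)-\varepsilon$ uniformly, and $\varepsilon\downarrow0$ finishes. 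Combined with the upper bound, this proves $\mathscr{A}f=A^\mu f+\varphi^\ast(|\nabla f|)$ with uniform convergence. I expect the genuinely delicate point to be the uniform control of the remainder $R_t$ in the upper bound, i.e.\ verifying that the \emph{nonparametric} supremum over all $\nu$ cannot exceed the first-order drift effect by more than $o(t)$ — this is precisely where the structural collapse from nonparametric to parametric uncertainty is quantified.
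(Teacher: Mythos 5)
Your proposal is correct and takes essentially the same route as the paper: the upper bound is the paper's argument verbatim (use $\mathscr{S}(t)f\le S(t)f$, expand to first order along an optimal coupling, apply H\"older with exponents $p,p^\ast$, and control the remainder uniformly via $\sup_{\nu\in\Delta_{f,t}}\mathcal{W}_p(\mu_t,\nu)\to0$ from Lemma \ref{lem:helpphi} together with $\mu_t\to\delta_0$), and the lower bound uses the identical competitor class $\mu_h\ast\delta_{h\gamma}$ with penalty telescoping to $t\varphi(|\gamma|)$ along the dyadic grid. The only (valid) cosmetic deviations are that the paper chooses the drift $a_xr_x$ pointwise-optimally in $x$ and gets uniformity from the uniform bound on $a_x$, whereas you fix a constant $\gamma$ and recover uniformity by a finite net over the compact closure of the range of $\nabla f$ using continuity of $\varphi^\ast$; and since $f$ is only assumed $C_0^1$, your ``second-order Taylor error'' should be read as the mean-value remainder $\bigl(\nabla f(x+y'+\xi)-\nabla f(x)\bigr)\cdot(z-y')$ with $|\xi|\le|z-y'|$, exactly as in the paper, so no $C^2$ regularity is needed.
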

\begin{proof}
	Fix $f\in D(A^\mu)\cap C_0^1(\mathbb{R}^d)$.
	
	(a)
	We start by showing that
	\begin{align}
	\label{eq:generator.geq}
	\mathscr{S}(t) f- f
	\geq tA^\mu f +  t\varphi^\ast(|\nabla f|) + o(t) \quad\mbox{as }t\downarrow 0.
	\end{align}
	To that end, let $t> 0$.
	For notational simplicity we assume that $t$ is a dyadic number, say $t=k_02^{-n_0}$ for some $k_0,n_0\in\mathbb{N}$; the general case (is only notationally heavier but) works analogously. 
	Then, $\mathscr{S}^{n_0}(t)$ is just the convolution of $S(2^{-n_0})$ with itself $k_0$ times. 
	For every $x\in\mathbb{R}^d$, let $r=r_x\in\mathbb{R}^d$ with $|r|=1$ and $a=a_x\geq 0$ be such that
	\begin{align}
	\label{eq:def.ax.r}
	r\nabla f(x)=|\nabla f(x)| \quad\text{and}\quad \varphi^\ast(|\nabla f(x)|)=a |\nabla f(x)|-\varphi(a),
	\end{align}
	where the product between elements in $\mathbb{R}^d$ is understood as the scalar product. 
	Note that such $r$ exists as $|\cdot|$ is its own dual norm, and such $a$ exists as $\lim_{y\to\infty }\varphi(y)/y=\infty$ which follows from the assumption that $y\mapsto \varphi(y^{1/p})$ is convex. Moreover, since $|\nabla f(x)|$ is uniformly bounded over $x\in\mathbb{R}^d$, the same holds for $a=a_x$.
	
	Now, for each $n\geq n_0$, set
	\[\nu^n_{2^{-n}}:=\mu_{2^{-n}}\ast \delta_{a2^{-n}r}.\]
	Then, one can compute that $\mathcal{W}_p(\mu_{2^{-n}}, \nu_{2^{-n}})=a2^{-n}$, and therefore
	\begin{align}
	\label{eq:estimate.S.below.generator}
	S(2^{-n}) g(y)
	\geq \int_{\mathbb{R}^d} g(y+z+ a2^{-n} r)\,\mu_{2^{-n}}(dz) -\varphi_{2^{-n}}(a2^{-n})
	\end{align}
	for all $g\in C_0(\mathbb{R}^d)$ and $y\in\mathbb{R}^d$. Note that $t= k_n 2^{-n}$ for $k_n:= k_02^{n-n_0}$, and that the measure which results in taking the convolution of $\nu_{2^{-n}}$ with itself $k_n$ times, is equal to $\mu_t\ast \delta_{at r}$.
	As further,
	\[k_n\varphi_{2^{-n}}(a 2^{-n})
	=k_0 2^{n-n_0} 2^{-n}\varphi(a )
	=t\varphi(a)
	=\varphi_t(at)\]
	and each $\varphi_{2^{-n}}(a 2^{-n})$ does not depend on the state variable, estimating every $S(2^{-n})$ which appears in the definition of $\mathscr{S}^n(t)$ (as the convolution of $S(2^{-n})$ with itself $k_n$ times) by \eqref{eq:estimate.S.below.generator} gives
	\begin{align*}
	\mathscr{S}^n(t) f(x)
	&\geq \int_{\mathbb{R}^d} f(x+y+atr )\,\mu_t(dy) - \varphi_t(at)
	\end{align*}
	for all $n\geq n_0$.
	The right hand side does not depend on $n$, so that the definition of $\mathscr{S}(t)f$ as the limit of $\mathscr{S}^n(t) f$ therefore implies that
	\begin{align*}
	\mathscr{S}(t)f(x)-f(x)
	&\geq \int_{\mathbb{R}^d} f(x+y)-f(x)\,\mu_t(dy) \\
	&\qquad +\int_{\mathbb{R}^d}  f( x + y + atr ) - f(x+y)\,\mu_t(dy) - \varphi_t(at)
	=:I_1+I_2.
	\end{align*}
	By definition of the infinitesimal generator $A^\mu$ of $(S^\mu(t))_{t\ge 0}$ and $f\in D(A^\mu)$, the first term $I_1$ equals $t A^\mu f+o(t)$ (uniformly over $x\in\mathbb{R}^d$).
	The second term $I_2$ is estimated by a Taylor's expansion:
	for some (measurable) $\xi=\xi(x,y)$ with $|\xi|\leq ta$, we may write
	\begin{align*}
	I_2&=\int_{\mathbb{R}^d} atr \nabla f(x+y +\xi(x,y)) \,\mu_t(dy) - t\varphi(a)\\
	&\geq ar \nabla f(x) + o(1)-t\varphi(a),
	\end{align*}
	uniformly over $x\in\mathbb{R}^d$, where we need to justify the last step.
	Indeed, this follows as in the proof of Lemma \ref{lem:uniform.at.zero} by splitting the $\mu_t(dy)$ integral into two parts (close to zero $\{|y|\le b\}$ and its complement $\{|y|> b\}$), and using  uniform continuity of $ar \nabla f(x+\cdot)$    together with the fact that $\mu_t(\{|y|>b\})\to 0$ and $\lim_{t\downarrow 0}\sup_{x,y\in\mathbb{R}^d}|\xi(x,y)|=0$ as $a=a_x$ is bounded uniformly over $x\in\mathbb{R}^d$.
	Recalling \eqref{eq:def.ax.r} we conclude that
	\[\int_{\mathbb{R}^d}  f(x+y+ atr ) - f(x+y)\,\mu_t(dy) - \varphi_t(at)
	\ge t\varphi^\ast(|\nabla f(x)|)+o(t)\]
	uniformly over $x\in\mathbb{R}^d$, which shows \eqref{eq:generator.geq}.
	
	(b) It remains to show that
	\begin{align}
	\label{eq:generator.leq}
	\mathscr{S}(t) f- f
	\leq t A^\mu f + t\varphi^\ast(|\nabla f|)+o(t).
	\end{align}
	Since $\int_{\mathbb{R}^d} f(x+y)\,\mu_t(dy)-f(x)=tA^\mu f(x) + o(t)$ as $f\in D(A^\mu)$ and $\mathscr{S}(t)f\leq S(t) f$ by Corollary \ref{cor:cal.S.properties}, it holds
	\begin{align*}
	&\mathscr{S}(t) f(x) - f(x)
	\leq S(t)f (x) - \int_{\mathbb{R}^d} f(x+y)\,\mu_t(dy) + tAf(x)+o(t) \\
	&=\sup_{u,\nu}	\Big(\int_{\mathbb{R}^d} f(x+z)\,\nu(dz)-\int_{\mathbb{R}^d} f(x+y)\,\mu_t(dy)  - t\varphi\big(\tfrac{u}{t}\big)\Big) + tAf(x)+o(t)
	\end{align*}
	uniformly over $x\in\mathbb{R}^d$, where the supremum is taken over all $u\geq 0$ and $\nu \in\mathcal{P}_p(\mathbb{R}^d)$ with $\mathcal{W}_p(\mu_t,\nu)=u$.
	Actually, for every $t\geq 0$,  one may restrict to those $u\geq 0$ for which $t\varphi(u/t)\leq \|f\|_\infty +1$.
	As $\varphi$ grows faster than linear, this implies that there is some $u_0$ (independent of $t$) for which the latter implies $u\leq u_0t$.
	
	Now, fix $0\leq u\leq u_0t$ and $\nu\in\mathcal{P}_p(\mathbb{R}^d)$ with $\mathcal{W}_p(\mu_t,\nu)=u$, and a coupling $\pi(dy,dz)$ between $\mu_t$ and $\nu$ which is optimal for $\mathcal{W}_p(\mu_t,\nu)$.
	By Taylor's theorem,
	\[
	f(x+z)-f(x+y)=\nabla f(x+y+\xi)(z-y)
	\]
	for all $x,y,z\in\mathbb{R}^d$, where  $\xi=\xi(x,y,z)$
	is a measurable function such that $|\xi|\le|z-y|$. Hence, it follows from H\"older's inequality that
	\begin{align*}
	&\int_{\mathbb{R}^d} f(x+z)\,\nu(dz)-\int_{\mathbb{R}^d} f(x+y)\,\mu_t(dy)
	=\int_{\mathbb{R}^d} \nabla f(x+\xi)(z-y)\,\pi(dy,dz)\\
	\leq& \Big(\int_{\mathbb{R}^d\times \mathbb{R}^d} |\nabla f(x+\xi)|^{p^\ast}\,\pi(dy,dz)\Big)^{1/p^\ast} \Big(\int_{\mathbb{R}^d\times \mathbb{R}^d} |z-y|^{p}\,\pi(dy,dz)\Big)^{1/p},
	\end{align*}
	where $p^\ast=p/(p-1)$ is the conjugate H\"older exponent of $p$.
	For every $0\leq u\leq u_0t$ and $\nu$ as above, it follows from  $\mathcal{W}_p(\delta_0,\nu)\leq \mathcal{W}_p(\delta_0,\mu_t) + u_0t=o(1)$, that
	\[ \Big(\int_{\mathbb{R}^d\times \mathbb{R}^d} |\nabla f(x+\xi)|^{p^\ast}\,\pi(dy,dz)\Big)^{1/p^\ast}
	\leq |\nabla f(x)|+ o(1) \]
	uniformly over $x\in\mathbb{R}^d$, again by the same arguments as in the proof of Lemma \ref{lem:uniform.at.zero}.
	Putting everything together yields
	\begin{align*}
	& \frac{1}{t}\Big( S(t) f(x) - \int_{\mathbb{R}^d} f(x+y)\,\mu_t(dy)\Big) \\
	&\leq \sup_{0\leq u\leq u_0 t} \Big(\frac{u}{t} (|\nabla f(x)| + o(1))  - \varphi\Big(\frac{u}{t}\Big)\Big)\leq \varphi^\ast( |\nabla f(x)| )
	\end{align*}
	uniformly over $x\in\mathbb{R}^d$, where the last inequality follows from the definition of the convex conjugate $\varphi^\ast$.
	This shows \eqref{eq:generator.leq} and therefore completes the proof.
\end{proof}

This is a good place to mention the recent paper \cite{bartl2020robust} in which related ideas as in Proposition \ref{prop:generator} are applied in the context of stochastic optimization.
With Proposition \ref{prop:generator} at our disposal, we can finally prove our main result (Theorem \ref{thm:pde.intro}), or rather its convex generalization (Theorem \ref{thm:pde.convex} below).

Before doing so, let us recall the notion of viscosity solution that we use:
denote by $C^1_0(\mathbb{R}^d)$ the space of all continuously differentiable 
functions $f\in C_0(\mathbb{R}^d)$ whose gradient is vanishing at infinity, and 
call $v\colon (0,\infty)\to C_0(\mathbb{R}^d)$ \emph{test function} if it is 
differentiable (w.r.t.\ the supremum norm) and satisfies $v(t)\in D(A^\mu)\cap 
C_0^1(\mathbb{R}^d)$ for every $t\in(0,\infty)$.

Then, following \cite{denk2020semigroup}, we say that a continuous function $u: [0, \infty) \rightarrow C_0(\mathbb{R}^d)$ is a \emph{viscosity subsolution} of 
\[	\begin{cases}
\partial_t u(t,x) = \mathscr{A} u(t,x) &\text{for } (t,x)\in (0,\infty)\times\mathbb{R}^d,\\
u(0,x)=f(x)&\text{for }x\in\mathbb{R}^d,
\end{cases}\]
if for every $(t,x) \in (0,\infty)\times\mathbb{R}^d$ and every test function $v$ satisfying $u\leq v$ and $v(t,x)=u(t,x)$, it holds that $\partial_t v(t,x)\le \mathscr{A}v(t,x)$. Similarly, $u$ is called \emph{viscosity supersolution} if the above holds with `$\leq$' replaced by '$\geq$' at both instances, and a \emph{viscosity solution} if it is both a viscosity supersolution  and subsolution.

As a consequence of the previous result we derive the following: 

\begin{theorem}
	\label{thm:pde.convex}
	Let $f\in C_0(\mathbb{R}^d)$ and define $u\colon[0,\infty)\times\mathbb{R}^d\to\mathbb{R}$  via $u(t,x):=\mathscr{S}(t)f(x)$.
	Then $u$ is a viscosity solution of
	\begin{align}
	\label{eq:PDE}
	\begin{cases}
	\partial_t u(t,x) = A^\mu u(t,x)  + \varphi^\ast(|\nabla u(t,x)|) &\text{for } (t,x)\in (0,\infty)\times\mathbb{R}^d,\\
	u(0,x)=f(x)&\text{for }x\in\mathbb{R}^d.
	\end{cases}
	\end{align}
\end{theorem}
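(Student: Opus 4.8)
The plan is to derive the viscosity solution property from the semigroup structure established in Proposition~\ref{prop:semigroup.convex} together with the explicit generator computation of Proposition~\ref{prop:generator}. The strategy rests on two ingredients: the strong continuity and semigroup identity $\mathscr{S}(t+h)=\mathscr{S}(h)\circ\mathscr{S}(t)$, which let me compare $u$ at nearby times, and the fact that on the dense subspace $D(A^\mu)\cap C_0^1(\mathbb{R}^d)$ the generator acts as $\mathscr{A}f=A^\mu f+\varphi^\ast(|\nabla f|)$. First I would verify the initial condition and continuity: $u(0,\cdot)=\mathscr{S}(0)f=f$ by Proposition~\ref{prop:semigroup.convex}(ii), and $t\mapsto u(t,\cdot)$ is continuous into $C_0(\mathbb{R}^d)$ by strong continuity at zero combined with the semigroup property and contractivity (so continuity at $t=0$ propagates to all $t>0$).

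\emph{Subsolution property.} Fix $(t,x)\in(0,\infty)\times\mathbb{R}^d$ and a test function $v$ with $u\le v$ and $v(t,x)=u(t,x)$. The idea is to estimate the forward difference quotient. Using the semigroup property, write
\[
\frac{u(t+h,x)-u(t,x)}{h}
=\frac{\mathscr{S}(h)u(t,\cdot)(x)-u(t,x)}{h},
\]
and since $u(t,\cdot)\le v(t,\cdot)$ pointwise with equality at $x$, monotonicity of $\mathscr{S}(h)$ (Corollary~\ref{cor:cal.S.properties}) gives $\mathscr{S}(h)u(t,\cdot)(x)\le \mathscr{S}(h)v(t,\cdot)(x)$. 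As $v(t,\cdot)\in D(A^\mu)\cap C_0^1(\mathbb{R}^d)$, Proposition~\ref{prop:generator} applies to the right-hand side, yielding $\mathscr{S}(h)v(t,\cdot)(x)-v(t,x)=h\,\mathscr{A}v(t,\cdot)(x)+o(h)$. Combining, and using $u(t,x)=v(t,x)$ together with differentiability of $v$ in time to replace $u(t+h,x)$-differences by $\partial_t v(t,x)$ in the limit, I obtain $\partial_t v(t,x)\le \mathscr{A}v(t,\cdot)(x)$ after letting $h\downarrow 0$. The supersolution property follows by the symmetric argument with the inequalities reversed ($u\ge v$, $v(t,x)=u(t,x)$), using that $\mathscr{S}(h)$ is monotone in the opposite direction there.

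\emph{The main obstacle} is the interchange of time differentiation: $u$ need not be differentiable in $t$, which is precisely why the viscosity (rather than classical) formulation is needed, and care is required to pass from the difference quotient of $u$ to the touching test function $v$. Concretely, the delicate step is controlling the error term $o(h)$ in Proposition~\ref{prop:generator} \emph{uniformly in $x$}, so that the pointwise evaluation at the contact point $x$ is legitimate; fortunately Proposition~\ref{prop:generator} was stated with uniform convergence precisely for this reason. A second subtlety is that the generator estimate requires the argument to lie in $D(A^\mu)\cap C_0^1(\mathbb{R}^d)$, which is guaranteed for $v(t,\cdot)$ by the definition of test function but \emph{not} for $u(t,\cdot)$ itself; this is exactly what forces the comparison with $v$ rather than a direct differentiation of the semigroup orbit. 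I would then remark that the uniqueness of the viscosity solution, and hence the identification of $u$ as the unique solution of \eqref{eq:PDE}, follows from the comparison principle available in the framework of \cite{denk2020semigroup}, so no separate uniqueness argument is developed here.
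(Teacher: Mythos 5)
There is a genuine gap in your subsolution argument: the forward-difference scheme cannot produce the inequality $\partial_t v(t,x)\le \mathscr{A}v(t,x)$. From $u\le v$ and monotonicity you correctly obtain $h^{-1}\big(u(t+h,x)-u(t,x)\big)\le h^{-1}\big(\mathscr{S}(h)v(t,\cdot)(x)-v(t,x)\big)\to \mathscr{A}v(t,\cdot)(x)$; but the only control the touching hypothesis gives on the left-hand side is $u(t+h,x)\le v(t+h,x)$, hence $\limsup_{h\downarrow 0} h^{-1}\big(u(t+h,x)-u(t,x)\big)\le \partial_t v(t,x)$. Both facts bound the \emph{same} difference quotient from \emph{above}, so they cannot be ``combined'' into a comparison between $\partial_t v(t,x)$ and $\mathscr{A}v(t,x)$; for that you would need the reverse bound $u(t+h,x)\ge v(t+h,x)+o(h)$, which fails in general precisely because $v$ dominates $u$. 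The same directional failure affects your supersolution sketch: monotonicity of $\mathscr{S}(h)$ applied with $v\le u$ produces two lower bounds on the same quantity (there is no sense in which $\mathscr{S}(h)$ is ``monotone in the opposite direction''), so again no inequality between $\partial_t v$ and $\mathscr{A}v$ results.

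The paper circumvents this by stepping \emph{backward} in time: by the semigroup property, $u(t)=\mathscr{S}(h)u(t-h)\le \mathscr{S}(h)v(t-h)$, with the expansion $v(t-h)=v(t)-h\partial_t v(t)+o(h)$. Here the touching enters at the \emph{terminal} time of the small step, where $u(t,x)=v(t,x)$, so the unknown difference quotient of $u$ disappears entirely and one is left with $0\le -\partial_t v(t,x)+h^{-1}\big(\mathscr{S}(h)v(t,\cdot)(x)-v(t,x)\big)+o(1)$, which Proposition~\ref{prop:generator} (whose uniform convergence you rightly flag as essential) turns into $\partial_t v(t,x)\le \mathscr{A}v(t,x)$. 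Note also a sub-step your proposal would still need even on the backward route: since $\mathscr{S}(h)$ is nonlinear, passing from $\mathscr{S}(h)\big(v(t)-h\partial_t v(t)+o(h)\big)$ to $\mathscr{S}(h)v(t)-h\partial_t v(t)+o(h)$ is not linearity; mere contractivity only yields an $O(h)$ error, and the paper instead derives the refined estimate by the concentration argument of Lemma~\ref{lem:uniform.at.zero}. Finally, your closing claim that uniqueness follows from a comparison principle in \cite{denk2020semigroup} overstates what is available: the paper only \emph{sketches} uniqueness, under additional conditions on the L\'evy process, via \cite{hu2009gl} after extending the semigroup to $\BUC(\mathbb{R}^d)$.
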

\begin{proof}
	To show that $u$ is a viscosity subsolution, let $v$ be a test function such that $u\le v$ and $v(t,x)=u(t,x)$ for some $(t,x)\in(0,\infty)\times\mathbb{R}^d$.
	Since $v$ is differentiable at $t$ there exists $\partial_tv(t)\in C_0(\mathbb{R}^d)$ such that $v(t+h)=v(t)+h\partial_t v(t) +o(h)$ for $|h|\to 0$.
	Similar to the proof of Lemma \ref{lem:uniform.at.zero} it follows that
	\[
	\mathscr{S}(h)\big(v(t)-h\partial_t v(t)+o(h)\big)-S(h)(v(t))=-h\partial_t v(t)+o(h)
	\]
	for $|h|\to 0$. 
	Hence, for $h>0$ small enough, using Proposition \ref{prop:semigroup.convex}, we have that
	\begin{align*}
	0&=\frac{\mathscr{S}(h)\mathscr{S}(t-h)f-\mathscr{S}(t)f}{h}
	=\frac{\mathscr{S}(h)u(t-h)-u(t)}{h}
	\le \frac{\mathscr{S}(h)v(t-h)-u(t)}{h}\\
	&= \frac{\mathscr{S}(h)\big(v(t)-h\partial_tv(t)+o(h)\big)-\mathscr{S}(h)v(t)}{h}+	\frac{\mathscr{S}(h)v(t)-u(t)}{h}\\
	&=-\partial_tv(t)+ \frac{\mathscr{S}(h)v(t)-v(t)}{h} + \frac{v(t)-u(t)}{h} + o(h).
	\end{align*}
	In particular, since  $h^{-1}( \mathscr{S}(h)v(t,x)-v(t,x))\to \mathscr{A}v(t,x)$ uniformly over $x\in\mathbb{R}^d$ by Proposition \ref{prop:generator}, and $v(t,x)=u(t,x)$ by assumption, we conclude that $0\le -\partial_tv(t,x) + \mathscr{A}v(t,x)$.
	This shows that $u$ is a viscosity subsolution of  \eqref{eq:PDE}.
	The arguments that $u$ is a viscosity supersolution follows along the same lines  successively replacing `$\leq$' by `$\geq$'.
\end{proof}

Finally, we sketch how uniqueness of the solution of \eqref{eq:PDE} in 	Theorem \ref{thm:pde.convex} may be obtained.
Under certain conditions on the initial L\'evy process, one obtains from \cite[Corollary 53]{hu2009gl}
uniqueness of the viscosity solution \eqref{eq:PDE} by using the space $C_b^{2, 3}((0, \infty)\times \mathbb{R}^d)$ as test functions. This requires an extension of the semigroup $(\mathscr{S}(t))_{t\ge 0}$ to the space $\BUC(\mathbb{R}^d)$ of all bounded and uniformly continuous functions, which may be achieved via monotone approximation and continuity from above of the operators $\mathscr{S}(t)$, $t\ge 0$, see also \cite[Remark 5.4]{denk2018kolmogorov} Then, by adapting Proposition \ref{prop:generator}, it follows that Theorem \ref{thm:pde.convex} also holds for the test functions $v \colon (0, \infty) \rightarrow \BUC(\mathbb{R}^d)$ which are differentiable and $v(t)\in \BUC^2(\mathbb{R}^d)$ for all $t\ge 0$, where $\BUC^2(\mathbb{R}^d)$ denotes the space of all functions which are twice differentiable with bounded uniformly continuous derivatives up to order 2. Once this is done, the results in \cite{hu2009gl} may be used, since $C_b^{2, 3}((0, \infty)\times \mathbb{R}^d)$ is a subset of the considered test functions, see \cite[Remark 2.7]{denk2020semigroup}.

%%%%%%%%%%%%%%%%%%%%%%%%%%%%%%%%%%%%%%%%%%%%%%%%%%%%%%%%%%%%%%%%%%%
%%                                                               %%
%% Supplementary Material, if any, should be provided in         %%
%% {supplement} environment  with title and short description.   %%
%%                                                               %%
%%%%%%%%%%%%%%%%%%%%%%%%%%%%%%%%%%%%%%%%%%%%%%%%%%%%%%%%%%%%%%%%%%%

%%%%%%%%%%%%%%%%%%%%%%%%%%%%%%%%%%%%%%%%%%%%%%%%%%%%%%%%%%%%%%%%%%%
%%                                                               %%
%% Use the two commands below for producing your bibliography    %%
%% with bibtex, then comment again the commands and include the  %%
%% content of the .bbl file in this file below the commands.     %%
%%                                                               %%
%%%%%%%%%%%%%%%%%%%%%%%%%%%%%%%%%%%%%%%%%%%%%%%%%%%%%%%%%%%%%%%%%%%

\bibliographystyle{abbrv}

% add below the content of your .bbl file produced by bibtex.

%%%%%%%%%%%%%%%%%%%%%%%%%%%%%%%%%%%%%%%%%%%%%%%%%%%%%%%%%%%%%%%%%%%
%%                                                               %%
%% You may add acknowledgments (optional).                       %%
%%                                                               %%
%%%%%%%%%%%%%%%%%%%%%%%%%%%%%%%%%%%%%%%%%%%%%%%%%%%%%%%%%%%%%%%%%%%

%%%%%%%%%%%%%%%%%%%%%%%%%%%%%%%%%%%%%%%%%%%%%%%%%%%%%%%%%%%%%%%%%%%
%%                                                               %%
%% You have reached the end of your document.                    %%
%%                                                               %%
%%%%%%%%%%%%%%%%%%%%%%%%%%%%%%%%%%%%%%%%%%%%%%%%%%%%%%%%%%%%%%%%%%%

\vspace{1em}
\noindent
\textsc{Acknowledgements:}
%\vspace{0.2em}
%
%\noindent
We thank the editor and an anonymous referee of \textsl{ECP} for valuable comments and feedback on an earlier version of the paper. Daniel Bartl is grateful for financial support through the Vienna Science and Technology Fund (WWTF) project MA16-021 and the Austrian Science Fund (FWF) project P28661.

%-----------------------   bibliography   ----------------------------------------
\end{document}